\newcommand{\T}{\ensuremath{\mathbb{T}}}
\newcommand*{\R}{\ensuremath{\mathbb{R}}}
\newcommand*{\Z}{\ensuremath{\mathbb{Z}}}
\newcommand*{\supp}{\ensuremath{\mathrm{supp\,}}}
\renewcommand*{\div}{\ensuremath{\mathrm{div\,}}}
\newcommand*{\e}{\ensuremath{\varepsilon}}
\newcommand{\eps}{\varepsilon}
\newcommand*{\curl}{\ensuremath{\mathrm{curl\,}}}
\newcommand{\vertiii}[1]{{\left\vert\kern-0.25ex\left\vert\kern-0.25ex\left\vert #1 
    \right\vert\kern-0.25ex\right\vert\kern-0.25ex\right\vert}}
\newtheorem{theorem}{Theorem}[section]
\newtheorem{lemma}[theorem]{Lemma}
\newtheorem{proposition}[theorem]{Proposition}
\newtheorem{corollary}[theorem]{Corollary}
\begin{document}
\pagenumbering{arabic}
\title[]{Regularity in time of H\"older solutions of Euler and hypodissipative Navier--Stokes equations}

\author[Colombo]{Maria Colombo}
\address{EPFL SB, Station 8, 
CH-1015 Lausanne, Switzerland
}
\email{maria.colombo@epfl.ch}


\author[De Rosa]{Luigi De Rosa}
\address{EPFL SB, Station 8, 
CH-1015 Lausanne, Switzerland
}
\email{luigi.derosa@math.uzh.ch}

\begin{abstract} 
In this work we investigate some regularization properties of the incompressible Euler equations and of the fractional Navier-Stokes equations where the dissipative term is given by $(-\Delta)^\alpha$, for a suitable power $\alpha \in (0,\frac{1}{2})$ (the only meaningful range for this result). Assuming that the solution $u \in L^\infty _t(C^\theta_x)$ for some $\theta \in (0,1)$ we prove that $u \in C^\theta_{t,x}$, the pressure $p\in C^{2\theta-}_{t,x}$ and the kinetic energy $e \in C^{\frac{2\theta}{1-\theta}}_t$. This result was obtained  for the Euler equations  in \cite{Is2013} with completely different arguments and we believe that our proof, based on a regularization and a commutator estimate, gives a simpler insight on the result.
\end{abstract}

\maketitle

\textbf{Keywords:} Euler and Navier-Stokes equations, H\"older solutions, regularity of the pressure, time regularity, fractional dissipation.

\textbf{2010 Mathematics Subject Classification:} 35Q31 35A01 35D30.
\section{Introduction}
In the spatial periodic setting $\T^3=\R^3\setminus \Z^3$, we consider the partial differential equations
\begin{equation}\label{NSalpha}
\left\{\begin{array}{l}
\partial_t u+ \div (u \otimes u) +\nabla p + \nu (-\Delta)^\alpha u =0\\ 
\div u = 0\,
\end{array}\right.\qquad \mbox{in } [0,\infty) \times \T^3
\end{equation}
where $u: (0,T)\times \T^3  \rightarrow \R^3$ represents the velocity of an incompressible fluid, $p:(0,T)\times \T^3 \rightarrow \R$ is the hydrodynamic pressure, with the constraint $\int_{\T^3}p\, dx =0$ which guaranties its uniqueness, $\nu \in [0,\infty)$ is the viscosity of the fluid. When $\nu=0$ we have the Euler system, when $\nu>0$ and $\alpha=1$ the Navier-Stokes system, and for $\nu>0$ and $\alpha \in (0,1)$ we have the hypodissipative Navier-Stokes system.\\

We are concerned with a class of distributional solutions of the previous system, which exibit H\"older continuous spatial regularity.
H\"older solutions are of particular importance for the Euler equations in the context of hydrodynamic turbulence, starting from a celebrated prediction of Kolmogorov's theory \cite{K1941}: the velocity increments in turbulent flows should obey on average a universal scaling law corresponding to the H\"older exponent $\frac{1}{3}$
$$< |v(x+ \Delta x) - v(x)|^p >^{1/p} \leq \tilde C(p) |\Delta x|^{1/3}.$$

In 1949, Onsager  \cite{Ons1949} formulated a precise conjecture related to Kolmogorov's theory, which states that
weak solutions to the Euler equations with spatial H\"older regularity $u \in L^\infty((0,T), C^\theta(\T^3))$ 
\begin{itemize}
\item conserve energy if they are regular enough, namely if $\theta>\frac{1}{3}$,
\item may dissipate their kinetic energy if $\theta<\frac{1}{3}$.
\end{itemize}
 The first part of the conjecture has been proven in \cite{CoETi1994}; the construction of H\"older continuous anomalous solutions was initiated by De Lellis and Székelyhidi Jr. in \cite{DS2013} and was then performed in a series of papers \cite{DLS2012}, \cite{DLS2015}, \cite{BDS2016}  leading to the proof of the conjecture by Isett in \cite{Is2016}.

In the following we exploit a regularizing property of the Euler equations, namely that weak solutions to the Euler equations with spatial H\"older regularity $u \in L^\infty((0,T), C^\theta(\T^3))$ are in fact $\theta$ H\"older continuous also in time. Moreover, the associated pressure is almost $2\theta $ H\"older continuous in space-time and the corresponding kinetic energy profile is $\frac{2\theta}{1-\theta} $ H\"older continuous.
This property can be observed in all the dissipative solutions constructed to validate the Onsager conjecture and it was first proved by Isett \cite{Is2013}. In his work, this regularization is obtained as a consequence of the regularity for advective derivatives, and involves refined and technical estimates on the Paley-Littlehood decomposition of the solution. 

Our proof is based on completely different ideas, involving a regularization of the equation as in \cite{CoETi1994} as well as their commutator estimate. The method is quite flexible and indeed we perform it not only for Euler, but also for the fractional Navier-Stokes system, which was observed to exibit the existence of anomalous solutions for an appropriate range of $\alpha$ \cite{{CDD2017}}, \cite{DR2018}.

\begin{theorem}\label{t:main}
Let $\theta \in (0,1)$, $\alpha \in \big(0,\frac{1}{2}\big)$, $\nu \geq 0$, and let $(u, p)$ be a weak solution of  \eqref{NSalpha} such that $u\in L^\infty((0,T), C^\theta(\T^3))$. Then there exists $C_\nu>0$, depending only on $\nu$, such that 
\begin{equation}
\label{eqn:u-time}
\|u\|_{C^{\theta}([0,T]\times \T^3)} \leq C_{\nu}\big( \|u\|_{L^\infty((0,T),C^\theta(\T^3))}+ \|u\|^2_{L^\infty((0,T), C^\theta(\T^3))}\big)\,.
\end{equation}
Moreover there exists $C_\theta >0$ and for every $\varepsilon>0$ small a constant $ C_{\theta, \nu, \varepsilon}>0$ such that
\begin{itemize}
\item[$(i)$]if $\theta \in \big(0,\frac{1}{2}\big)$
\begin{align}
\|p\|_{ L^\infty((0,T), C^{2\theta}(\T^3))} &\leq C_\theta \|u\|^2_{L^\infty((0,T), C^\theta(\T^3))}\,, \label{ts:pspace}\\
\|p\|_{C^{2\theta- \eps}([0,T] \times \T^3)} &\leq C_{\theta, \nu, \varepsilon} \big( \|u\|^2_{L^\infty((0,T), C^\theta(\T^3))} +\|u\|^3_{L^\infty((0,T), C^\theta(\T^3))}  \big) \label{ts:ptime}\,;
\end{align}
\item[$(ii)$]if $\theta \in \big(\frac{1}{2},1\big)$
\begin{align}
\| p\|_{ L^\infty((0,T), C^{1,2\theta-1}(\T^3))} &\leq C_\theta \|u\|^2_{L^\infty((0,T), C^\theta(\T^3))}\,, \label{ts:pspace-thetahigh} \\
\| p\|_{C^{1,2\theta-1- \eps}([0,T] \times \T^3)} &\leq C_{\theta, \nu, \varepsilon} \sum_{m=2}^4\|u\|^m_{L^\infty((0,T), C^\theta(\T^3))}\label{ts:ptime-thetahigh}\,.
\end{align}
\end{itemize}
\end{theorem}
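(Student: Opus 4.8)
The strategy is to regularise the equation by spatial mollification, in the spirit of \cite{CoETi1994}, and to combine this with the classical Schauder estimate for the Newtonian potential and a Constantin--E--Titi type commutator estimate for the dissipative term; every H\"older exponent in space and time is then read off by optimising a single interpolation inequality over the mollification scale $\ell$. Throughout, $\rho_\ell(x)=\ell^{-3}\rho(x/\ell)$ is a standard mollifier and $f^\ell=f\ast\rho_\ell$, and one uses repeatedly the elementary bounds $\|f-f^\ell\|_{C^0}\lesssim\ell^\beta[f]_{C^\beta}$ and $\|\nabla^kf^\ell\|_{C^0}\lesssim\ell^{\beta-k}[f]_{C^\beta}$ (the latter because $\nabla^k\rho_\ell$ has vanishing mean for $k\ge1$), together with the fact that the zero--order operators $(-\Delta)^{-1}\partial_i\partial_j$ are bounded on $C^\beta(\T^3)$ for $\beta\in\R\setminus\Z$.

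\emph{Spatial estimates for $p$ and time regularity of $u$.} Taking the divergence of \eqref{NSalpha} gives $-\Delta p=\partial_i\partial_j(u_iu_j)$ for a.e.\ $t$; since $\div u=0$, for every $x_0$ one has $\partial_i\partial_j(u_iu_j)=\partial_i\partial_j\big((u_i-u_i(x_0))(u_j-u_j(x_0))\big)$, so the source is, near $x_0$, bounded by $\|u(t)\|_{C^\theta}^2|x-x_0|^{2\theta}$, and the Newtonian potential estimate yields $[p(t)]_{C^{2\theta}}\lesssim_\theta\|u(t)\|_{C^\theta}^2$ if $2\theta<1$ (resp.\ $[\nabla p(t)]_{C^{2\theta-1}}\lesssim_\theta\|u(t)\|_{C^\theta}^2$ if $2\theta>1$), plus $\|p\|_{C^0}\lesssim\|u\|_{C^\theta}^2$ from $\int_{\T^3}p\,dx=0$; taking $\sup_t$ gives \eqref{ts:pspace} and \eqref{ts:pspace-thetahigh}. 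For the time regularity of $u$, mollify \eqref{NSalpha} in space: $\partial_tu^\ell=-\div\big((u\otimes u)^\ell\big)-\nabla p^\ell-\nu(-\Delta)^\alpha u^\ell$. The first term is $\lesssim\ell^{\theta-1}\|u\|_{C^\theta}^2$; by the spatial estimate $\|\nabla p^\ell\|_{C^0}\lesssim\ell^{\min(2\theta,1)-1}\|u\|_{C^\theta}^2\le\ell^{\theta-1}\|u\|_{C^\theta}^2$; and, writing $(-\Delta)^\alpha u^\ell$ through its singular integral representation and splitting at scale $\ell$, $\|(-\Delta)^\alpha u^\ell\|_{C^0}\lesssim\ell^{\theta-2\alpha}\|u\|_{C^\theta}\le\ell^{\theta-1}\|u\|_{C^\theta}$, where $\alpha<\tfrac12$ is used both for convergence near the diagonal and for the last inequality. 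Hence $\|\partial_tu^\ell\|_{C^0}\lesssim_\nu\ell^{\theta-1}\big(\|u\|_{C^\theta}+\|u\|_{C^\theta}^2\big)$; for $|t-s|\le1$, combining with $\|u-u^\ell\|_{C^0}\lesssim\ell^\theta\|u\|_{C^\theta}$ and choosing $\ell=|t-s|$ gives $\|u(t)-u(s)\|_{C^0}\lesssim_\nu|t-s|^\theta(\|u\|_{C^\theta}+\|u\|_{C^\theta}^2)$, while for $|t-s|>1$ this is trivially $\le2\|u\|_{C^0}|t-s|^\theta$; together with the spatial H\"older hypothesis this is \eqref{eqn:u-time}, with constant depending only on $\nu$.

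\emph{Time regularity of the pressure.} Differentiating $-\Delta p=\partial_i\partial_j(u_iu_j)$ in time and inserting $\partial_tu=-\div(u\otimes u)-\nabla p-\nu(-\Delta)^\alpha u$, a short computation using $\div u=0$ (and $(-\Delta)^{-1}\partial_i\partial_i\partial_j=-\partial_j$) writes $\partial_tp$ as the sum of a cubic term $-(-\Delta)^{-1}\partial_i\partial_j\partial_k(u_iu_ju_k)$, a pressure term built from $(u\cdot\nabla)p$ and $(-\Delta)^{-1}\partial_i\partial_j(p\,\partial_iu_j)$, and a dissipative term which, since $(-\Delta)^\alpha u$ is again divergence free, equals $-\nu(-\Delta)^\alpha p$ up to a fractional Leibniz commutator; by the commutator estimate of \cite{CoETi1994} this last piece has $C^{2\theta-2\alpha}_x$ norm $\lesssim\nu\,\sum_m\|u\|_{C^\theta}^m$, hence is strictly subordinate precisely because $\alpha<\tfrac12$ (this is where the range of $\alpha$ enters). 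After one further spatial mollification these terms are far better than derivative counting suggests: subtracting $u(x_0)$ from two of the three factors in the cubic term and $p(x_0)$ from $p$ in the pressure term (legitimate after integration against mollifier derivatives, whose mean vanishes, the leftovers being killed by $\div u=0$) and invoking the pressure Schauder estimate of the previous step on the resulting $2\theta$--vanishing sources yields $\|\partial_\tau p^\ell\|_{C^0}\lesssim_{\theta,\nu}\ell^{2\theta-1}\sum_m\|u\|_{C^\theta}^m$ (and $\|\nabla\partial_\tau p^\ell\|_{C^0}\lesssim\ell^{2\theta-2}\sum_m\|u\|_{C^\theta}^m$ in the regime $\theta>\tfrac12$). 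Combining $\|p-p^\ell\|_{C^0}\lesssim\ell^{2\theta}\|u\|_{C^\theta}^2$ with $\|p^\ell(t)-p^\ell(s)\|_{C^0}\le|t-s|\sup_\tau\|\partial_\tau p^\ell\|_{C^0}$ and optimising in $\ell$ gives $\|p(t)-p(s)\|_{C^0}\lesssim_{\theta,\nu,\varepsilon}|t-s|^{2\theta-\varepsilon}\sum_m\|u\|_{C^\theta}^m$ (resp.\ $\|\nabla(p(t)-p(s))\|_{C^0}\lesssim|t-s|^{2\theta-1-\varepsilon}\sum_m\|u\|_{C^\theta}^m$); together with the spatial estimates this is \eqref{ts:ptime} and \eqref{ts:ptime-thetahigh}.

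\emph{Main obstacle.} Everything hinges on the last step and, within it, on recovering a full gain of $\theta$ in the estimate for $\partial_\tau p^\ell$: naive differentiation only shows $\partial_tp\in C^{\theta-1}_x$, which through the interpolation would give the suboptimal time exponent $2\theta/(1+\theta)$, falling short of $2\theta-\varepsilon$ for every $\theta$ bounded away from $0$. Obtaining the sharp value forces one to exploit the divergence--free constraint twice in the cubic term --- so as to reduce it, via the Schauder estimate, to an object of spatial regularity $2\theta-1$ --- and to use the Leibniz/commutator structure, together with $\alpha<\tfrac12$, to keep the dissipative contribution subordinate. The remaining ingredients (the mollifier bounds, the Newtonian potential estimate, the interpolation, and the bookkeeping of the dependence of constants on $\nu$ and $\varepsilon$) are routine.
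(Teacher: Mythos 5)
Your overall strategy—spatial mollification, the improved double-divergence Schauder estimate obtained by subtracting constants and exploiting $\div u=0$, the fractional Leibniz commutator for the dissipative term, and optimisation in the mollification scale—is the same as the paper's, and it does yield \eqref{eqn:u-time}, \eqref{ts:pspace}, \eqref{ts:pspace-thetahigh} and \eqref{ts:ptime}. However, for $\theta>\tfrac12$ your argument does not prove \eqref{ts:ptime-thetahigh}. The norm $\|p\|_{C^{1,2\theta-1-\eps}([0,T]\times\T^3)}$ requires the \emph{space-time} H\"older seminorm of \emph{all} first derivatives, hence in particular the H\"older continuity of $\partial_t p$ \emph{in time}. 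Your optimisation in $\ell$ produces only (a) time-H\"older continuity of $\nabla_x p$ with exponent $2\theta-1-\eps$ and (b) boundedness (and, plausibly, spatial H\"older continuity) of $\partial_t p$; these two facts do not imply time-H\"older continuity of $\partial_t p$ (take $p(t,x)=g(t)$ with $g\in C^1$ and $g'$ not H\"older: both (a) and (b) hold trivially). The paper needs a separate, genuinely different argument for this piece: it decomposes $\partial_tp(t)-\partial_tp(s)$, applies the \emph{asymmetric} trilinear estimate of Proposition~\ref{t:2schau}(iii) with exponents $\beta=1-\theta+\eps$, $\gamma=\theta$ so that the factor $u(t)-u(s)$ only carries a $C^{1-\theta+\eps}$ norm, interpolates that norm between $C^0$ and $C^\theta$ using the already-established time regularity \eqref{eqn:u-time}, and estimates the time increment of the commutator $T^\alpha$ and of $\nabla p\otimes u$ by further interpolation. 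None of this is "routine bookkeeping", and it is absent from your proposal.

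Two secondary points. First, the commutator you need for the dissipative term is the fractional Leibniz defect $T^\alpha(f,g)=(-\Delta)^\alpha(f\otimes g)-(-\Delta)^\alpha f\otimes g-f\otimes(-\Delta)^\alpha g$; this is \emph{not} the commutator estimate of \cite{CoETi1994}, which concerns the mollification error $u_\delta\otimes u_\delta-(u\otimes u)_\delta$, so you must prove it (the paper does, in Proposition~\ref{commutator_alpha}). Moreover the exponent you claim, $C^{2\theta-2\alpha}_x$, is not attainable by that estimate when $\theta\geq 2\alpha$ (one only gets $C^{\theta-\eps}$), although the weaker bound still suffices here since only $C^{2\theta-1}$, respectively $C^{\eps}$, regularity is needed. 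Second, for $\theta<\tfrac12$ you cannot differentiate $-\Delta p=\partial_i\partial_j(u_iu_j)$ in time and insert $\partial_tu$ before mollifying, since products such as $\nabla p\otimes u$ are not defined for $p\in C^{2\theta}$ with $2\theta<1$; one must keep the mollification throughout (as the paper does), which brings in the mollification commutator $R_\delta$ terms that your sketch omits—these are exactly where the genuine \cite{CoETi1994}-type bounds $\|R_\delta\|_{C^N}\lesssim\delta^{2\theta-N}[u]^2_{C^\theta}$ are used.
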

The assumption $\alpha<\frac{1}{2}$ is absolutely natural in this context: indeed, for $\alpha$ above this threshold any H\"older continuous solution to the $\alpha$-Navier-Stokes equation is in fact smooth by simple bootstrap arguments, based on the regularization of the ``fractional heat equation'' part of \eqref{NSalpha}, considering the nonlinearity and the pressure as a right-hand side.\\

In the following result, we consider the kinetic and total energies of the system \eqref{NSalpha}
\begin{equation}\label{energies}
e_u(t)=\frac{1}{2}\int_{\T^3}|u|^2(x,t)\,dx\,,\qquad
E_u(t)=e_u(t) + \nu \int_0^t\int_{\T^3}|(-\Delta)^{\sfrac{\alpha}{2}} u|^2(x,r)\,dxdr\,
\end{equation}
which coincide for the Euler equations, namely when $\nu =0$. We show that, instead of asking $u\in C^\theta$, a suitable spatial Sobolev (or Besov) regularity on the velocity $u$ is enough to guarantee H\"older regularity of the energies. This is obviously due to their "integral" nature.

\begin{theorem}\label{t:second}
Let $\theta \in (0,1)$, $\nu \geq 0$, $\alpha \in (0,\frac{1}{2})$ with the restriction that $\alpha<\theta$  if $\nu >0$. Let $u\in L^\infty((0,T), W^{\theta,3}(\T^3))$ or $u\in L^\infty((0,T), B^\theta_{3,\infty}(\T^3))$ be a weak solution of \eqref{NSalpha}. Then if $\nu =0$ (namely, for the Euler system) we have
$$|e_u(t)-e_u(s)|\leq C_{u,\theta}|t-s|^{\frac{2\theta}{1-\theta}}\,,$$
where $C_{u,\theta}=C_\theta\big(  [u]^2_{L^\infty((0,T),B^\theta_{3,\infty}(\T^3))}+[u]^3_{L^\infty((0,T),B^\theta_{3,\infty}(\T^3))} \big)$;
if $\nu>0$ (namely, for the hypodissipative Navier-Stokes system) we have 
\begin{equation}
\label{eqn:ts-energ}
|E_u(t)-E_u(s)|\leq C_{u,\theta,\alpha}|t-s|^\frac{2(\theta-\alpha)}{1-3\theta+2(\theta-\alpha)}\,,
\end{equation}
where $ C_{u,\theta,\alpha}=C_{\theta,\alpha}\big(    [u]^2_{L^\infty((0,T),B^\theta_{3,\infty}(\T^3))}+[u]^3_{L^\infty((0,T),B^\theta_{3,\infty}(\T^3))}\big)$.
\end{theorem}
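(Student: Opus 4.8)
The plan is to run the Constantin--E--Titi mollification scheme and then optimize over the mollification scale. Since $W^{\theta,3}(\T^3)=B^\theta_{3,3}(\T^3)\hookrightarrow B^\theta_{3,\infty}(\T^3)$, we may assume $u\in L^\infty((0,T),B^\theta_{3,\infty}(\T^3))$. Fix a standard even spatial mollifier $\phi_\ell$ at scale $\ell\in(0,1]$, set $u_\ell=u*\phi_\ell$, $p_\ell=p*\phi_\ell$, and write $R_\ell:=u_\ell\otimes u_\ell-(u\otimes u)_\ell$; mollifying \eqref{NSalpha} gives
\[
\partial_t u_\ell+\div(u_\ell\otimes u_\ell)+\nabla p_\ell+\nu(-\Delta)^\alpha u_\ell=\div R_\ell .
\]
Since $u\in L^\infty_tL^3_x$, we have $u\otimes u\in L^\infty_tL^{3/2}_x$ and (by Calder\'on--Zygmund) $p\in L^\infty_tL^{3/2}_x$, so $(u\otimes u)_\ell$, $p_\ell$, $u_\ell$ are spatially smooth with $L^\infty_t$ bounds on all derivatives; hence $\partial_t u_\ell$ is too and $u_\ell$ is Lipschitz in time, so the energy identity may be differentiated classically. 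Testing with $u_\ell$, using $\div u_\ell=0$ to annihilate the transport and pressure terms and $\int_{\T^3}(-\Delta)^\alpha u_\ell\cdot u_\ell=\|(-\Delta)^{\alpha/2}u_\ell\|_{L^2}^2$, we obtain with $E_{u_\ell}(t):=e_{u_\ell}(t)+\nu\int_0^t\|(-\Delta)^{\alpha/2}u_\ell\|_{L^2}^2$
\[
E_{u_\ell}(t)-E_{u_\ell}(s)=-\int_s^t\Pi_\ell(r)\,dr,\qquad \Pi_\ell:=\int_{\T^3}R_\ell:\nabla u_\ell\,dx .
\]

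Two scale-dependent estimates drive the proof. The \emph{flux bound}: by the commutator estimate of \cite{CoETi1994}, $\|R_\ell\|_{L^{3/2}}\lesssim\ell^{2\theta}[u]_{B^\theta_{3,\infty}}^2$, and $\|\nabla u_\ell\|_{L^3}\lesssim\ell^{\theta-1}[u]_{B^\theta_{3,\infty}}$ (both only involve increments of $u$), so H\"older with exponents $\tfrac32,3$ gives $|\Pi_\ell|\lesssim\ell^{3\theta-1}[u]_{B^\theta_{3,\infty}}^3$. The \emph{approximation bound}: expanding $e_u-e_{u_\ell}$ and the dissipative difference on the Fourier side, the zero mode drops out (so only the Besov seminorm enters), $|1-|\widehat\phi(\ell k)|^2|\lesssim\min((\ell|k|)^2,1)$, and summing the Littlewood--Paley pieces with $B^\theta_{3,\infty}(\T^3)\hookrightarrow B^\theta_{2,\infty}(\T^3)$ gives $|e_u-e_{u_\ell}|\lesssim\ell^{2\theta}[u]_{B^\theta_{3,\infty}}^2$ and, for $\nu>0$, $\nu\int_0^t\big|\|(-\Delta)^{\alpha/2}u\|_{L^2}^2-\|(-\Delta)^{\alpha/2}u_\ell\|_{L^2}^2\big|\lesssim\nu T\,\ell^{2(\theta-\alpha)}[u]_{B^\theta_{3,\infty}}^2$; convergence of the dyadic series in the dissipative term is precisely where $\alpha<\theta$ enters, and for $\ell\le1$ the slower rate $\ell^{2(\theta-\alpha)}$ absorbs $\ell^{2\theta}$, so altogether $|E_u(t)-E_{u_\ell}(t)|\lesssim\ell^{2(\theta-\alpha)}[u]_{B^\theta_{3,\infty}}^2$ (this also shows $E_{u_\ell}\to E_u$ uniformly).

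Combining these with the triangle inequality,
\[
|E_u(t)-E_u(s)|\lesssim\ell^{2(\theta-\alpha)}[u]^2+|t-s|\,\ell^{3\theta-1}[u]^3\qquad\text{for all }\ell\in(0,1],
\]
where $[u]:=[u]_{L^\infty((0,T),B^\theta_{3,\infty})}$. If $3\theta>1$ the flux bound forces $\Pi_\ell\to0$ as $\ell\to0$, hence $E_u$ is constant and the claim is trivial; if $3\theta\le1$ then $1-\theta-2\alpha>1-3\theta\ge0$ by $\alpha<\theta$, and for $|t-s|$ small choosing the admissible scale $\ell\sim|t-s|^{1/(1-\theta-2\alpha)}$ balances the two terms at $|t-s|^{2(\theta-\alpha)/(1-\theta-2\alpha)}=|t-s|^{2(\theta-\alpha)/(1-3\theta+2(\theta-\alpha))}$, which is $|t-s|^{2\theta/(1-\theta)}$ when $\nu=\alpha=0$; for $|t-s|$ bounded below the estimate follows from $|E_u(t)-E_u(s)|\lesssim(1+\nu T)[u]^2$, using conservation of $\int_{\T^3}u\,dx$ so that the mean cancels in $e_u(t)-e_u(s)$. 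Tracking the powers of $[u]$ through the optimization yields the stated $C_{u,\theta}$ and $C_{u,\theta,\alpha}$.

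The main obstacle is twofold: carefully justifying the mollified energy balance for a merely distributional solution (the time-Lipschitz regularity of $u_\ell$, the integrations by parts, and the vanishing of the transport and pressure contributions after mollification), and obtaining the scaling exponents $3\theta-1$ and $2(\theta-\alpha)$ sharply. The latter, in the dissipative part, forces the Plancherel computation rather than the crude bound $\|u-u_\ell\|_{L^3}\lesssim\ell^\theta$ (which yields only $\ell^\theta$ and a worse Hölder exponent) and hinges on the hypothesis $\alpha<\theta$, the one genuinely new ingredient relative to the Euler case, which is exactly what makes the dissipative frequency sum summable.
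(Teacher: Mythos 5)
Your proposal is correct and takes essentially the same route as the paper: the Constantin--E--Titi mollification of \eqref{NSalpha}, the flux bound $|t-s|\,\delta^{3\theta-1}[u]^3$ from $\|R_\delta\|_{L^{3/2}}\|\nabla u_\delta\|_{L^3}$, the approximation bound $\delta^{2\theta}$ (resp.\ $\delta^{2(\theta-\alpha)}$, which is where $\alpha<\theta$ enters), and the optimization $\delta\sim|t-s|^{1/(1-\theta)}$ resp.\ $\delta\sim|t-s|^{1/(1-3\theta+2(\theta-\alpha))}$. The only minor deviations are that you establish the approximation error by a Plancherel/Littlewood--Paley computation, whereas the paper uses the mean-preservation identity together with the mollification estimate \eqref{est4} and the interpolation $[(-\Delta)^{\alpha/2}u]_{B^{\theta-\alpha}_{2,\infty}}\leq \|u\|_{B^{\theta}_{2,\infty}}$, and that you insert a harmless case distinction ($3\theta>1$ giving conservation versus $3\theta\leq 1$) which the paper does not make.
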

Note that the previous theorem implies the energy conservation for \eqref{NSalpha}, in particular $e_u$ is conserved if $\theta>\frac{1}{3}$ and $E_u$ if $\theta >\max\{ \frac{1}{3}, \alpha\}$, since the H\"older exponents are greater than $1$.\\

To prove the time regularities, we look at a regularized version of \eqref{NSalpha} in the spirit of the proof of conservation of the energy for $\theta>\frac{1}{3}$ by  Constantin, E and Titi in \cite{CoETi1994}.

Let  $\rho \in C^ \infty_c (B_1(0))$ be a standard nonnegative  kernel such that  $\int_{B_1(0)} \rho(x) dx=1$. For any $ \delta>0 $ we define its rescaling $ \rho_\delta:=\delta^{-3} \rho(\frac{x}{\delta})$ and we consider the mollifications (in space) of $u$ and $p$
\[
u_\delta (t,x):=(u \ast \rho_\delta)(t,x)=\int_{B_\delta(x)}u(t,y) \rho_\delta(x-y) \,dy,
\qquad p_\delta (t,x):=(p \ast \rho_\delta)(t,x)\,.
\]
Thuis, mollifying equations \eqref{NSalpha} one gets
\begin{equation}\label{euler_moll}
\partial_t u_\delta+ \div (u_\delta \otimes u_\delta) +\nabla p_\delta +\nu (-\Delta)^\alpha u_\delta= \div R_\delta\,,
\end{equation}
where $R_\delta :=u_\delta \otimes u_\delta-(u \otimes u)_\delta$. It is easy to see that the energy identity for $u_\delta$ becomes
\begin{equation}\label{en_moll_eq}
\frac{d}{dt}\frac{1}{2}\int_{\T^3}|u_\delta|^2\,dx+\nu\int_{\T^3}|(-\Delta)^{\sfrac{\alpha}{2}}u_\delta|^2\,dx=\int_{\T^3}u_\delta\cdot \div R_\delta \,dx=-\int_{\T^3} R_\delta : \nabla u_\delta\,dx\,.
\end{equation}
Then we estimate the variation of $u$, $e_u$ and $E_u$ between two different times $s<t$ through $u_\delta$, $e_{u_\delta}$ and $E_{u_\delta}$ respectively, and we optimize the choice of $\delta$ in terms of $|t-s|$.

Regarding the pressure, taking the divergence of the first equation in \eqref{NSalpha} the pressure $p$ solves
\begin{equation}\label{laplace_p}
-\Delta p=\div \div (u\otimes u) \\
\end{equation}
and the solution is unique up to the renormalization $\int_{\T^3} p(t,x)\,dx =0$ for every $t \in [0,T]$.
By Schauder estimates
 we infer
\begin{equation}\label{p_ctheta}
\|p\|_{L^\infty((0,T),C^\theta(\T^3))}\leq C_\theta \|u\otimes u\|_{L^\infty((0,T),C^\theta(\T^3))}\leq C_\theta\|u\|^2_{L^\infty((0,T),C^\theta(\T^3))} \,.
\end{equation}

To improve the regularity as stated in \eqref{ts:pspace}, namely to show that $p$ is not only $\theta$-H\"older continuous but actually $2\theta$-H\"older continuous, we exploit
the quadratic structure of the right hand side $\div \div (u \otimes u)$, together with the solenoidal nature of the vector field $u$, to improve the regularity of the pressure. The space regularity of the pressure in $\T^3$ is then a direct consequence of Proposition \ref{t:2schau} and Lemma \ref{extension_lemma} below.
Finally, the time regularity of the pressure is obtained for $\theta<\frac{1}{2}$ by differentiating the equation \eqref{laplace_p}
$$
\Delta \partial_t p= \div \div \div (u \otimes u\otimes u) +\div \div \big(2 \nabla p \otimes u +\nu (-\Delta)^\alpha u \otimes u +\nu u\otimes (-\Delta)^\alpha u \big)\,,
 $$
and by exploiting again the structure of the right-hand side; 
in this case the presence of the fractional laplacian in the right-hand side introduces a technical difficulty to the analysis.

\section{Notations and technical preliminaries}
Our $3-$dimensional spatial domain $\Omega$ will be either an open set $\Omega \subseteq \R^3$ or $\Omega=\T^3$, thus considering vector fields $u:(0,T)\times  \Omega \rightarrow \R^3$ and a scalar field $p: (0,T)\times\Omega  \rightarrow \R$.We  denote by $ u^i$ the $i-$th component of the vector field $ u$, $\partial _i$ will be the derivative with respect to the $i-$th space variable. 

In what follows $\theta \in (0,1)$, $p\in[1,\infty)$,  $n \in \mathbb{N}$ and $\beta$ is a multi-index, $f$ is a (scalar or vector valued) function defined on $\Omega$. We introduce the usual (spatial) H\"older and $C^n(\Omega)$ norms
\begin{align*}
 \| f\|_{C^0(\Omega)}:=\sup_{x \in \Omega} \arrowvert f(x) \arrowvert\,,\quad
 [f]_{C^n(\Omega)}:=\sup_{|\beta|=n} \| D^\beta f\|_{C^0(\Omega)}\,,\quad
 \|f\|_{C^n(\Omega)}:=\| f \|_{C^0(\Omega)}+\sum_{j=1}^n[f]_{C^j(\Omega)}\,,
\end{align*}
\begin{align*}
   [f]_{C^\theta(\Omega)} := \sup_{x \neq y, \, x, y \in \Omega}\frac{|f(x)-  f(y)|}{|x-y|^\theta}\,,\quad \|f\|_{C^{n,\theta}(\Omega)} :=\|f\|_{C^n(\Omega)}+\sup_{|\beta|=n}[D^\beta f]_{C^\theta(\Omega)}.
\end{align*}
Denoting by $\|f\|_{L^p(\Omega)} := \big(\int_{\Omega} |f|^p(x)\, dx\big)^\frac{1}{p}$ the $L^p$-norm, we let the Sobolev and Besov norms
\begin{align*}
[f]_{W^{\theta,p}(\Omega)}  := \biggl(\int_{\Omega} \int_{\Omega} \frac{|f(x)-  f(y)|^p}{|x-y|^{\theta p + 3}}\, dx dy\biggl)^\frac{1}{p}, \qquad \|f\|_{W^{\theta,p}(\Omega)} := \|f\|_{L^p(\Omega)}+[f]_{W^{\theta,p}(\Omega)} \, .
 \,.
\end{align*}
\begin{align*}
[f]_{B^\theta_{p,\infty}(\Omega)}:= \sup_{y\in \Omega} \frac{\|f(\cdot + y)-f(\cdot)\|_{L^p(\Omega)}}{|y|^\theta}\, ,\qquad
\|f\|_{B^\theta_{p,\infty}(\Omega)}:=\|f\|_{L^p(\Omega)}+[f]_{B^\theta_{p,\infty}(\Omega)}\,.
\end{align*}
For the spaces defined above we have the trivial inclusions $C^\theta(\Omega)  \hookrightarrow B^\theta_{p,\infty}(\Omega)$ and $B^{\theta+\varepsilon}_{p,\infty}(\Omega) \hookrightarrow W^{\theta,p}(\Omega)$ for any $\varepsilon>0$, $p\in[1,\infty)$, $\theta \in(0,1)$.

\bigskip
%

\section{Proof of the main theorems}
To prove the space regularity of the pressure we exploit the explicit potential theoretic solution of the Laplace equation in $\R^3$. To this end, we denote by $\Phi(x):=\frac{1}{4 \pi |x|}$ the fundamental solution of the Laplace operator $-\Delta$, which enjoys the estimates
$\big|D^n \Phi(x)\big| \leq {C(n)}{|x|^{-1-n}} $ for all $n\in \mathbb{N}$. We recall that given $R \in C^{\theta}_c (\R^3; \R^{3 \times 3}) $ compactly supported, the potential theoretic solution of $-\Delta p = \div \div R$ is the only solution $p \in C^\theta(\R^3)$ which vanishes at infinity and it is given by the formula (with the Einstein summation convention)
\begin{equation}
\label{eqn:repr-p}
p(x) = \int_{B_{R_0}(x_0)} \partial^2_{ij} \Phi(x-y) \big( R^{ij}(y)- R^{ij}(x)\big)\, dy -R^{ij}(x) \int_{\partial B_{R_0}(x_0)} \partial_{i} \Phi(x-y) \nu_j(y)\, dy \,,
\end{equation}
where $B_{R_0}(x_0)$ is any ball containing the support of $R$ (and $R$ is thought to be extended to $0$ outside its support) and $\nu(y)$ is the normal to $B_{R_0}(x_0)$ at $y$. 
Notice that the first integrand is not singular around $x$ thanks to the H\"older regularity of $R$. Given any parameter $\lambda=\theta, \nu, \varepsilon$ we will explicitly write $C_{\lambda}$ to denote constants which depend only on $\lambda$.

\begin{proposition}\label{t:2schau}
Let $\beta,\gamma \in (0,1)$ and 
$v,w,z \in C^0(\R^3)$  be solenoidal vector fields compactly supported. If $p,q:\R^3 \rightarrow \R$ are the potential theoretic solutions of 
\begin{equation}\label{laplace_p_solospazio}
-\Delta p= \div \div (v\otimes w) \qquad \mbox{and} \qquad -\Delta q= \div \div  \div (v\otimes w\otimes z)\,,
\end{equation}
then there exist constants $C_{ \beta, \gamma}, C_\theta>0$ such that the following holds
\begin{itemize}
\item[(i)]if $\beta+ \gamma \in (0, 1)$ then $\|p\|_{C^{\beta+\gamma}(\R^3)} \leq C_{\beta, \gamma} \|v\|_{C^{\beta}(\R^3)} \|w\|_{C^{\gamma}(\R^3)}$,
\item[(ii)]if $\beta+ \gamma \in (1,2)$ then $\|p\|_{C^{1,\beta+\gamma-1}(\R^3)} \leq C_{\beta, \gamma}  \|v\|_{C^{\beta}(\R^3)} \|w\|_{C^{\gamma}(\R^3)}$,
\item[(iii)]if $\beta+ \gamma \in (1,2)$ then 
\begin{equation}\label{eqn:q}
\begin{split}
\| q\|_{C^{\beta+\gamma-1}(\R^3)} \leq &C_{\beta, \gamma} \|v\|_{C^0(\R^3)} \|w\|_{C^{\beta}(\R^3)} \|z\|_{C^{\gamma}(\R^3)}
\\&+C_{\beta, \gamma} \|v\|_{C^\beta(\R^3)} \big(\|w\|_{C^{0}(\R^3)} \|z\|_{C^{\gamma}(\R^3)}+ \|w\|_{C^{\gamma}(\R^3)} \|z\|_{C^{0}(\R^3)}\big).
\end{split}
\end{equation}
\end{itemize}
\end{proposition}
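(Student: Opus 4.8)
The strategy is to work from the explicit potential-theoretic representation \eqref{eqn:repr-p}, exploiting the divergence structure of the right-hand sides to integrate by parts and distribute derivatives favorably. For part (i), write $-\Delta p = \partial_i \partial_j(v^i w^j)$, so that $p(x) = \int \partial_i\partial_j \Phi(x-y)\, v^i(y) w^j(y)\, dy$ after one bulk integration by parts against $\partial_j$ (using that $v$ is solenoidal to avoid a term $\partial_j v^i$, and keeping track of the boundary contributions on $\partial B_{R_0}(x_0)$ exactly as in \eqref{eqn:repr-p}). The key point is that $\int \partial_i\partial_j\Phi(x-y)\,dy$ over the ball, up to the explicit boundary term, is a bounded Calderón–Zygmund-type object, so one can subtract the constant $v^i(x)w^j(x)$ and estimate the difference $v^i(y)w^j(y) - v^i(x)w^j(x)$. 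To get the Hölder exponent $\beta+\gamma$, split this difference as $(v^i(y)-v^i(x))w^j(y) + v^i(x)(w^j(y)-w^j(x))$, so it is bounded by $\|v\|_{C^\beta}\|w\|_{C^0}|x-y|^\beta + \|v\|_{C^0}\|w\|_{C^\gamma}|x-y|^\gamma \lesssim \|v\|_{C^\beta}\|w\|_{C^\gamma}(|x-y|^\beta + |x-y|^\gamma)$. Pairing $|x-y|^{\min(\beta,\gamma)}$ against the $|x-y|^{-3}$ singularity of $\partial^2\Phi$ gives a convergent integral near the diagonal; to then upgrade from boundedness of $p$ to $C^{\beta+\gamma}$-regularity one runs the standard Schauder-type argument: estimate $|p(x_1)-p(x_2)|$ by splitting the domain into $|y-x_1| \le 2|x_1-x_2|$ (where the integrand is controlled directly, using the $|x-y|^{\beta+\gamma-3}$ decay from the combined Hölder increments) and $|y-x_1| > 2|x_1-x_2|$ (where one uses the mean value theorem on $\partial^2\Phi$, gaining a factor $|x_1-x_2||x-y|^{-4}$, against the increment $|x-y|^{\beta+\gamma}$, integrable since $\beta+\gamma<1<4$). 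This yields the bound $|x_1-x_2|^{\beta+\gamma}$.

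For part (ii), the right-hand side is the same but now $\beta+\gamma \in (1,2)$, so one expects $p \in C^{1,\beta+\gamma-1}$. Here I would instead leave one derivative on the kernel: write $\partial_k p(x) = \int \partial_k\partial_i\partial_j\Phi(x-y)(v^iw^j(y) - v^iw^j(x))\,dy$ plus boundary terms, and note $\partial^3\Phi$ has a $|x-y|^{-4}$ singularity while the increment $v^iw^j(y)-v^iw^j(x)$ is $C^{\min(\beta,\gamma)}$ hence gives $|x-y|^{\min(\beta,\gamma)}$ — but $\min(\beta,\gamma)$ need not exceed $1$, so this naive pairing does not converge. The fix is to use the full Hölder modulus $\beta+\gamma$ of the product $v^iw^j$ only after one more cancellation, or more cleanly to keep two derivatives on $\Phi$ as in (i) and differentiate the resulting bounded expression: since $p \in C^{\beta'+\gamma}$ for any $\beta' < \min(\beta, 1-\gamma)$ by part (i)'s argument whenever $\beta'+\gamma<1$, a bootstrap/interpolation does not directly reach the claim, so the honest route is the Schauder splitting again but now estimating the \emph{first difference quotient of $\partial p$}: bound $|\partial_k p(x_1) - \partial_k p(x_2)|$ by the same near/far decomposition, where in the far region the mean value theorem on $\partial^3\Phi$ gives $|x_1-x_2||x-y|^{-5}$ against $|x-y|^{\beta+\gamma}$ (integrable since $\beta+\gamma<2<5$), and in the near region one uses the increment $v^iw^j(y)-v^iw^j(x_1)$ together with the explicit integrals of $\partial^3\Phi$ over the annulus, which contribute the missing power. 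The exponent bookkeeping gives $|x_1-x_2|^{\beta+\gamma-1}$, and boundedness of $\partial_k p$ itself follows from the same computation with $x_1 = x_2$ replaced by the direct estimate using $|x-y|^{\beta+\gamma-4}$, integrable since $\beta+\gamma>1$ means $\beta+\gamma-4>-3$.

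For part (iii), the right-hand side is $-\Delta q = \partial_i\partial_j\partial_\ell(v^i w^j z^\ell)$, a \emph{third}-order divergence, so one can afford to peel off all three derivatives onto the product and still land with $\Phi$ itself (which has the mild $|x-y|^{-1}$ singularity) in the representation: $q(x) \sim \int \partial_i\partial_j\partial_\ell\Phi(x-y)\,v^iw^jz^\ell(y)\,dy$, or after subtracting the value at $x$ and using solenoidality of $v$ (to kill $\partial_i v^i$) plus the appropriate boundary terms on $\partial B_{R_0}$. Now $\partial^3\Phi \sim |x-y|^{-4}$, and the product increment $v^iw^jz^\ell(y) - v^iw^jz^\ell(x)$ is split via the telescoping $v^i(y)w^j(y)z^\ell(y) - v^i(x)w^j(x)z^\ell(x) = (v^i(y)-v^i(x))w^j(y)z^\ell(y) + v^i(x)(w^j(y)-w^j(x))z^\ell(y) + v^i(x)w^j(x)(z^\ell(y)-z^\ell(x))$, producing exactly the three groups of norms $\|v\|_{C^\beta}\|w\|_{C^0}\|z\|_{C^0}$, $\|v\|_{C^0}\|w\|_{C^\beta}\|z\|_{C^0}$ (or $C^\gamma$), etc., with a gain of $|x-y|^{\min(\beta,\gamma) \text{ or more}}$; matching this against the target regularity $\beta+\gamma-1$ requires distributing which factor carries $C^\beta$ and which carries $C^\gamma$ so that the two "good" exponents always sum to at least $\beta+\gamma$ in the relevant increment — this is why the final bound has the particular combination of $C^0$, $C^\beta$, $C^\gamma$ norms displayed in \eqref{eqn:q}. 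The $C^{\beta+\gamma-1}$ estimate (both boundedness, since $\beta+\gamma-1>0$ gives $|x-y|^{\beta+\gamma-1-4}$ with exponent $>-3$... wait, one needs care: after the single subtraction we only gain $\min(\beta,\gamma)$, so to make $|x-y|^{\min-4}$ integrable we need $\min(\beta,\gamma)>1$, which fails; hence one must keep only \emph{two} derivatives on $\Phi$ in the near-diagonal region and carry the third as a difference, exactly as in part (ii)) and the Hölder difference quotient is then the same near/far Schauder splitting.

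\textbf{Main obstacle.} The genuinely delicate point is part (iii) and the $\beta+\gamma-1$ case of part (ii) when $\min(\beta,\gamma) \le 1$: a single subtraction $R(y) - R(x)$ does not render the kernel integrable, so one cannot simply bound the potential by its integrand. The resolution — distributing derivatives between the kernel and the data so that in the near-diagonal annulus one always has enough cancellation (two Hölder increments, or one increment plus the explicit cancellation of annular integrals of odd kernels) while in the far region the mean value theorem supplies extra decay — is the technical heart, and it is precisely this careful allocation that forces the non-symmetric mixture of $C^0$, $C^\beta$, $C^\gamma$ norms in \eqref{eqn:q} rather than a clean product of two $C^{\beta+\gamma}$-type factors. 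Controlling the boundary integrals over $\partial B_{R_0}(x_0)$ uniformly (they are harmless, smooth, and absorbed into the constants, but must be tracked to keep the solenoidal integrations by parts legitimate) is routine but must not be skipped.
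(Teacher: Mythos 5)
There is a genuine gap, and it is precisely at the point your proposal glosses over: the source of the additive exponent $\beta+\gamma$. Your representation always carries a \emph{single} increment of the product, $v^iw^j(y)-v^iw^j(x)$, possibly telescoped as $(v^i(y)-v^i(x))w^j(y)+v^i(x)(w^j(y)-w^j(x))$. Each such term gains only one H\"older power, $|x-y|^{\beta}$ \emph{or} $|x-y|^{\gamma}$, never the product $|x-y|^{\beta}|x-y|^{\gamma}$; in particular the quantities you invoke later --- ``the combined H\"older increments'' giving $|x-y|^{\beta+\gamma-3}$ decay near the diagonal, the increment $|x-y|^{\beta+\gamma}$ in the far region, and ``the full H\"older modulus $\beta+\gamma$ of the product $v^iw^j$'' --- do not exist: a product of a $C^\beta$ and a $C^\gamma$ function is only $C^{\min(\beta,\gamma)}$. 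Running your near/far Schauder splitting with what the single subtraction actually provides yields $[p]_{C^{\min(\beta,\gamma)}}$ (standard Schauder), not $[p]_{C^{\beta+\gamma}}$, so the conclusion of (i) is not reached, and the same defect propagates to (ii) and (iii), where your own parenthetical (``wait, one needs care \dots'') flags the problem without resolving it: neither ``keeping two derivatives on $\Phi$ near the diagonal'' nor ``cancellation of annular integrals of odd kernels'' is developed into an estimate that produces $\beta+\gamma-1$.

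The missing idea is to use the solenoidality of \emph{both} (resp.\ all three) fields to subtract \emph{two different constants at two different points}: since $\div v=\div w=0$, one has $\partial_{ij}(v^iw^j)=\partial_i w^j\,\partial_j v^i=\partial_{ij}\big((v^i-v^i(x_1))(w^j-w^j(x_2))\big)$ for any fixed $x_1,x_2$. Writing the potential-theoretic solution for this modified (still compactly supported, after subtracting the constant $v^i(x_1)w^j(x_2)$) right-hand side, the difference $p(x_1)-p(x_2)$ becomes an integral of $\big[\partial^2_{ij}\Phi(x_1-y)-\partial^2_{ij}\Phi(x_2-y)\big]\,(v^i(y)-v^i(x_1))(w^j(y)-w^j(x_2))$ (the boundary terms cancel by a symmetry/oddness argument), and it is the \emph{product of two independent increments} that supplies the factor $|x_1-y|^{\beta}|x_2-y|^{\gamma}$ and hence the exponent $\beta+\gamma$ after the near/far splitting. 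Part (ii) uses the same device with both constants taken at the same point $x$ in the formula $\partial_k p(x)=\int\partial^3_{ijk}\Phi(x-y)(v^i(y)-v^i(x))(w^j(y)-w^j(x))\,dy$, and part (iii) uses a triple subtraction $(v^i-v^i(x_1))(w^j-w^j(x_2))(z^k-z^k(x_0))$, with the leftover terms containing frozen constants reduced to the structure already handled in (ii); this decomposition is also what produces the asymmetric mixture of $C^0,C^\beta,C^\gamma$ norms in \eqref{eqn:q}. Without this double/triple subtraction your scheme cannot beat the exponent $\min(\beta,\gamma)$, so the proof as proposed does not go through.
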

Taking $\beta= \gamma= \theta$ in the previous proposition and $v=w= u$, one obtains that the potential theoretic solutions $p$ and $q$ of $-\Delta p= \div \div (u\otimes u)$ and $-\Delta q= \div \div  \div (u\otimes u\otimes u)$ obey
\begin{equation}
\label{luigi}
\|p\|_{C^{2\theta}(\R^3)} \leq C_\theta \|u\|^2_{C^{\theta}(\R^3)} \qquad \mbox{if $\theta \in \big(0,\frac{1}{2}\big)$},
\end{equation}
$$\|p\|_{C^{1,2\theta-1}(\R^3)} \leq C_\theta \|u\|^2_{C^{\theta}(\R^3)}\quad \mbox{and}\quad \| q\|_{C^{2\theta-1}(\R^3)} \leq C_\theta\|u\|_{C^0(\R^3)} \|u\|^2_{C^{\theta}(\R^3)} \qquad \mbox{if $\theta \in \big(\frac{1}{2},1\big)$}.$$\\
However, the more general nature of Proposition~\ref{t:2schau} will be useful to deal with Theorem~\ref{t:main}(ii); in this case, we will take advantage not only of the structure of the equation for $p$, $\nabla p$ and $\partial_tp$, but also of the (previously showed) regularity of $u$ in time, and for this scope we will need Proposition~\ref{t:2schau} in its generality, including the nonsymmetric nature of \eqref{eqn:q} with respect to $v,w$ and $z$.
We do not expect \eqref{luigi} to hold for $\theta=\frac{1}{2}$ due to the usual loss in Schauder estimates in $C^n$-type, rather than H\"older, spaces; for $\theta=\frac{1}{2}$ the estimate reads $\|p\|_{C^{1-\varepsilon}(\R^3)}\leq C_\varepsilon \|u\|^2_{C^{\frac{1}{2}}(\R^3)}$
.
\begin{proof}
{\it (i)} We will prove that 
\begin{equation}
\label{ts:p-holder}[p]_{C^{\beta+\gamma}(\R^3)} \leq  C\|v\|_{C^{\beta}(\R^3)} \|w\|_{C^{\gamma}(\R^3)}.
\end{equation}
The estimate for $\|p\|_{C^0(\R^3)}$ (as well as the one for $ [p]_{C^{\min\{\beta, \gamma\}}(\R^3)}$) follows from the standard Schauder estimates. 
For any $x_1,x_2 \in \R^3$, we define $\bar x =\frac{x_1+x_2}{2}$ and $\lambda=|x_1-x_2|$. Since $\div  v=\partial_i  v^i=0= \div  w=\partial_i  w^i$, we observe that the equation for $p$ can be rewritten as
\begin{equation*}
\begin{split}
-\Delta p&= \partial_{ij} (v^i w^j) = \partial_{i} w^j \partial_{j} v^i = \partial_{i} (w^j-w^j(x_2)) \partial_{j} (v^i - v^i(x_1))
\\&= \partial_{ij} \big( (v^i - v^i(x_1)) (w^j-w^j(x_2)) \big) = \partial_{ij} \big( (v^i - v^i(x_1)) (w^j-w^j(x_2)) - v^i(x_1)w^j(x_2) \big) 
\end{split}
\end{equation*}
Since the function $ (v^i - v^i(x_1)) (w^j-w^j(x_2)) - v^i(x_1)w^j(x_2) $ is compactly supported in $B_1$, we conclude that the potential theoretic solution associated to it is the same as the potential theoretic solution associated to $v^i w^j$, namely by \eqref{eqn:repr-p} applied with $B_{R_0}(x_0)= B_{R_0}(\bar x)$ it is given by the representation formula
\begin{equation*}
\begin{split}
 p(x)&=\int_{B_{R_0}(\bar x)} \partial^2_{ij} \Phi(x-y) \big[ \big( v^i(y)- v^i(x_1)\big)\big(  w^j(y)-w^j(x_2)\big) - \big( v^i(x)-v^i(x_1)\big)\big(  w^j(x)-w^j(x_2)\big) \big]\, dy  
 \\&-\big[ \big( v^i(x)- v^i(x_1)\big)\big(  w^j(x)-w^j(x_2)\big) - v^i(x_1)w^j(x_2) \big]  \int_{\partial B_{R_0}(\bar x)} \partial_{i} \Phi(x-y) \nu_j(y)\, dy\,,
\end{split}
\end{equation*}
for every $x \in \R^3$. 
Through the isometry $y \to x_1+x_2-y$, using that $\partial_i \Phi$ is odd and $\nu(y) = -\nu (x_1+x_2-y)$, we observe that
\begin{equation*}
\begin{split}
\int_{\partial B_{R_0} (\bar x)}\!\!\!\!\!\!\!\! \partial_{i} \Phi(x_1-y) \nu_j(y)\, dy
&=\int_{\partial B_{R_0} (\bar x)} \!\!\!\!\!\!\!\!\partial_{i} \Phi(y-x_2) \nu_j(x_1+x_2-y)\, dy
=\int_{\partial B_{R_0} (\bar x)} \!\!\!\!\!\!\!\!\partial_{i} \Phi(x_2-y) \nu_j(y)\, dy\,.
\end{split}
\end{equation*}
Hence, we rewrite the incremental quotient as
\begin{align*}
 p(&x_1)- p( x_2)= \int_{B_{R_0}(\bar x)} \big[ \partial^2_{ij} \Phi(x_1-y) - \partial^2_{ij} \Phi(x_2-y) \big] \big(  v^i(y)- v^i(x_1)\big)\big(  w^j(y)- w^j(x_2)\big)\, dy 
\,.
\end{align*}
Splitting the contributions of $y \in B_\lambda(\bar x)$ and $y\in B^c_\lambda(\bar x)$,
\begin{align}\nonumber
 |p(&x_1)- p( x_2)| \leq \int_{(B_{R_0} 
 \setminus B_\lambda)(\bar x)} \!\!\!\!\!\!\!\!\big| \partial^2_{ij} \Phi(x_1-y) - \partial^2_{ij} \Phi(x_2-y) \big| \big|  v^i(y)- v^i(x_1)\big|\big|  w^j(y)- w^j(x_2)\big|\, dy 
  \\
 &+ \int_{B_{\lambda}(\bar x)} \big( |D^2\Phi(x_1-y)|+|D^2 \Phi(x_2-y)| \big) \big|  v^i(y)- v^i(x_1)\big|\big|  w^j(y)- w^j(x_2)\big|\, dy  = I + II\label{eqn:p-diff}
\end{align}
%
Using the decay of $|D^2 \Phi|$, for $k=1,2$ we estimate the second integral in the right-hand side of \eqref{eqn:p-diff} with 
\begin{align*}
II 
&\leq [v]_{C^{\beta}} [w]_{C^{\gamma}}   
\int_{B_\lambda(\bar x)} \big[ \lambda^{\gamma}\big|D^2 \Phi(x_1-y)\big| |x_1-y|^{\beta} + \lambda^{\beta}\big|D^2 \Phi(x_2-y)\big| |x_2-y|^{\gamma}\big]\,dy \\
& \leq C [v]_{C^{\beta}} [w]_{C^{\gamma}}  \Big( \int_{B_{2\lambda}( x_1)} \frac{\lambda^{\gamma}}{|x_1-y|^{3-\beta}}\,dy +
\int_{B_{2\lambda}( x_2)} \frac{\lambda^{\beta}}{|x_2-y|^{3-\gamma}}\,dy 
\Big) 
\leq C \lambda^{\beta+\gamma}[v]_{C^{\beta}} [w]_{C^{\gamma}} \,.
\end{align*}
By the decay of $|D^3 \Phi|$, in particular since for every point $\tilde x\in B_{\lambda/2}(\bar x)$ and for every $y\in B^c_\lambda(\bar x)$ we have $|\tilde x-y| \geq |\bar x-y| - |\tilde x-\bar x| \geq \frac{|\bar x-y|}{2}$ and 
$
\big|D^3 \Phi(\tilde x-y)\big| \leq \frac{C}{|\tilde x-y|^{4}} \leq \frac{C}{|\bar x-y|^{4}} 
$, we have
\begin{align*}
I &\leq \lambda [v]_{C^{\beta}} [w]_{C^{\gamma}}\int_{B^c_\lambda(\bar x)} \Big(\int_0^1 \big|D^3 \Phi(t x_1+ (1-t)x_2-y)\big| \,dt \Big)\, |x_1-y|^{\beta} |x_2-y|^{\gamma} \,dy \\
&\leq C \lambda [v]_{C^{\beta}} [w]_{C^{\gamma}} \int_{B^c_\lambda(\bar x)} \frac{ |x_1-y|^{\beta} |x_2-y|^{\gamma}}{|\bar x-y|^{4}} \,dy
\\&
\leq C \lambda[v]_{C^{\beta}} [w]_{C^{\gamma}} \int_{B^c_\lambda(\bar x)} \frac{1}{|\bar x-y|^{4-\beta-\gamma}} \,dy\leq  C \lambda^{\beta+\gamma}[v]_{C^{\beta}} [w]_{C^{\gamma}}\,.
\end{align*}
This concludes the proof of (i) (notice that in the last line we used that $\beta+\gamma<1$).\\

{\it (ii)} If $\beta+\gamma \in \big(\frac{1}{2},1\big)$ we have that for every partial derivative $\partial_k$ and for every given $x\in \R^3$
$$
-\Delta \partial_k p(y)= \partial^3_{ijk} (v^i (y)w^j(y))=\partial^2_{ij} \partial_k \big((v^i(y)-v^i(x))(w^j(y)-w^j(x))\big)\,.
$$
Since $\partial_k \big((v^i-v^i(x))(w^j-w^j(x))\big)$ is compactly supported we can use again the representation formula \eqref{eqn:repr-p} getting
\begin{align*}
\partial_k p(x)&=\int_{B_{R_0}(x_0)} \partial^2_{ij}\Phi(x-y) \partial_k \big((v^i(y)-v^i(x))(w^j(y)-w^j(x))\big)\,dy 
\,.
\end{align*}
Integrating by parts (this can be easily justified approximating $u$ with smooth functions) and letting $R_0 \rightarrow \infty$ we obtain
\begin{equation}\label{global_formula}
\partial_k p(x)=\int_{\R^3} \partial^3_{ijk}\Phi(x-y)(v^i(y)-v^i(x))(w^j(y)-w^j(x))\,dy\,.
\end{equation}
For every $x_1, x_2 \in \R^3$ we define $\overline x =\frac{x_1+x_2}{2}$, $\lambda=|x_1-x_2|$ and we write
\begin{align*}
\partial_k p(x_1)-&\partial_kp(x_2)= \int_{B_{\lambda}(\overline x)} \partial^3_{ijk}\Phi(x_1-y)(v^i(y)-v^i(x_1))(w^j(y)-w^j(x_1))\,dy \\
&-\int_{B_{\lambda}(\overline x)} \partial^3_{ijk}\Phi(x_2-y)(v^i(y)-v^i(x_2))(w^j(y)-w^j(x_2))\,dy \\
&+\int_{B^c_{\lambda}(\overline x)}\big(\partial^3_{ijk}\Phi(x_1-y)- \partial^3_{ijk}\Phi(x_2-y)\big)(v^i(y)-v^i(x_1))(w^j(y)-w^j(x_1))\,dy\\
&+\int_{B^c_{\lambda}(\overline x)} \partial^3_{ijk}\Phi(x_2-y) (v^i(y)-v^i(x_2))(w^j(x_2)-w^j(x_1))\,dy \\
&+\int_{B^c_{\lambda}(\overline x)} \partial^3_{ijk}\Phi(x_2-y) (v^i(x_2)-v^i(x_1))(w^j(y)-w^j(x_1))\,dy\,,
\end{align*}
and, arguing as in the proof for $\beta+\gamma <1$, it is easy to see that each of the above integrals is estimated by $C \lambda^{\beta+\gamma-1}[v]_{C^{\beta}} [w]_{C^{\gamma}}$ from which the estimate on $p$ in (ii) follows.\\

{\it (iii)} 
We note that for every choice of $x_1, x_2, x_0$ we can write $q=q^1+q^2$ where 
\begin{align*}
- \Delta q^1&= \partial^2_{ij} \partial_k \big( (v^i-v^i(x_1))(w^j-w^j(x_2))(z^k-z^k(x_0))\big)\,\\
-\Delta q^2&= \partial^3_{ijk} \big( (v^i(x_1) w^j z^k\big)+  \partial^3_{ijk} \big( (v^i w^j(x_2) z^k\big) +\partial^3_{ijk} \big( (v^i w^j z^k(x_0)\big)\,.
\end{align*}
Since the right hand side of the Poisson equation for $q^2$ has exactly the same structure of the one for $\partial_k p$ (the only difference are the constants but they do not play any role and they can be estimated by 
their respective $C^0$ norm) in the previous computations, we can infer that $q^2$ enjoys the estimate \eqref{eqn:q}.
For $q^1$ we can use the same formula as in \eqref{global_formula} choosing $x_0=x_m$ when we have to evaluate $q^1(x_m)$. Thus for $m=1,2$ we can write
$$
q^1(x_m)= \int_{\R^3} \partial^3_{ijk} \Phi (x_m-y) (v^i(y)-v^i(x_1))(w^j(y)-w^j(x_2))(z^k(y)-z^k(x_m))\,dy
$$
and again, letting $\lambda=|x_1-x_2|$, $\overline x=\frac{x_1+x_2}{2}$ and splitting the contributions in $B_\lambda(\overline x)$ and $ B_\lambda^c(\overline x)$ we write
\begin{align*}
&q^1(x_1)-q^1(x_2)=\int_{B_\lambda(\overline x)}\partial^3_{ijk} \Phi(x_1-y)(v^i(y)-v^i(x_1))(w^j(y)-w^j(x_2))(z^k(y)-z^k(x_1))\,dy \\
&-\int_{B_\lambda(\overline x)}\partial^3_{ijk} \Phi(x_2-y)(v^i(y)-v^i(x_1))(w^j(y)-w^j(x_2))(z^k(y)-z^k(x_2))\,dy \\
&+\int_{B^c_\lambda(\overline x)}\partial^3_{ijk} \Phi(x_1-y)(v^i(y)-v^i(x_1))(w^j(y)-w^j(x_2))(z^k(x_2)-z^k(x_1))\,dy \\
&+\int_{B^c_\lambda(\overline x)}\big( \partial^3_{ijk} \Phi(x_1-y)-\partial^3_{ijk}\Phi(x_2-y)\big) (v^i(y)-v^i(x_1))(w^j(y)-w^j(x_2))(z^k(y)-z^k(x_2))\,dy 
\,.
\end{align*}
We estimate each term in the same spirit as the previous computations to get
\begin{align*}
|q^1(x_1)&-q^1(x_2)| \leq C \|v\|_{C^\beta} \|w\|_{C^0} \|z\|_{C^\gamma} \int_{B_\lambda(\overline x)} \frac{1}{|x_1-y|^{4-\beta-\gamma}}\,dy 
+ C
\|v\|_{C^0} \|w\|_{C^\beta} \|z\|_{C^\gamma} \\
&
\times\Big( 
\int_{B_\lambda(\overline x)} \frac{1}{|x_2-y|^{4-\beta-\gamma}}\,dy 
+
\lambda^\gamma \int_{B_\lambda^c(\overline x)} \frac{1}{|\overline x -y|^{4-\beta} }\,dy 
+
\lambda\int_{B_\lambda^c(\overline x)} \frac{1}{|\overline x -y|^{5-\beta-\gamma}}\,dy 
\Big)
\\&
\leq C  \lambda^{\beta+\gamma-1} \big(\|v\|_{C^0(\R^3)} \|w\|_{C^{\beta}(\R^3)} \|z\|_{C^{\gamma}(\R^3)} + \|v\|_{C^\beta(\R^3)} \|w\|_{C^{0}(\R^3)} \|z\|_{C^{\gamma}(\R^3)}\big)\,, 
\end{align*}
which concludes the proof of (ii).
\end{proof}

In order to adapt the previous proposition to periodic solutions in $\R^3$ (thus without any decay at infinity) we will use the following lemma.
\begin{lemma}[Extension Lemma]\label{extension_lemma}
Let $\theta \in (0,1)$. For any $u \in C^\theta(\T^3)$ such that $\div u=0$, there exists a vector field $\tilde u \in C^\theta( \R^3)$ compactly supported in $B_{12}(0)$ and a positive constant $C_\theta>0$ such that $\div \tilde u=0$, $\tilde u \equiv u$ in $B_6(0)$ and
\begin{equation}\label{ext_cont}
\|\tilde u\|_ {C^\theta( \R^3)} \leq C_\theta \| u\|_ {C^\theta( \T^3)}\,.
\end{equation}
\end{lemma}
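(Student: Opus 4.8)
The plan is to write the (periodic) field $u$ as the curl of a vector potential having one extra derivative of H\"older regularity, to multiply that potential by a smooth cutoff equal to $1$ on $B_6(0)$, and then to take the curl again: since $\div\curl=0$ the result is automatically solenoidal, and on $B_6(0)$, where the cutoff's gradient vanishes, it coincides with $u$.

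\emph{The potential.} Viewing $u$ as a $\Z^3$-periodic element of $C^\theta(\R^3)$, I would split it into its mean $\bar u:=\int_{\T^3}u\,dx\in\R^3$ and the zero-average remainder $v:=u-\bar u$, which is still divergence free. The constant part has the explicit potential $A_0(x):=\tfrac{1}{2}\,\bar u\times x$, with $\curl A_0=\bar u$ and $\|A_0\|_{C^{1,\theta}(B_8(0))}\le C|\bar u|$. For $v$, solve $-\Delta W=v$ componentwise on $\T^3$ with $\int_{\T^3}W\,dx=0$ (possible since $v$ has zero mean) and apply the classical Schauder estimate on the torus to get $W\in C^{2,\theta}(\T^3;\R^3)$ with $\|W\|_{C^{2,\theta}(\T^3)}\le C_\theta\|v\|_{C^\theta(\T^3)}$. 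Because $-\Delta(\div W)=\div v=0$ and $\div W$ is periodic with zero mean, $\div W\equiv 0$; hence $\curl\curl W=\nabla\div W-\Delta W=v$, so $A_1:=\curl W\in C^{1,\theta}(\T^3;\R^3)$ satisfies $\curl A_1=v$ with $\|A_1\|_{C^{1,\theta}(\T^3)}\le C_\theta\|v\|_{C^\theta(\T^3)}$. Then $A:=A_0+A_1$ (with $A_1$ extended $\Z^3$-periodically) obeys $\curl A=u$ on $\R^3$, and using $|\bar u|+\|v\|_{C^\theta(\T^3)}\le C\|u\|_{C^\theta(\T^3)}$ one gets $\|A\|_{C^{1,\theta}(B_8(0))}\le C_\theta\|u\|_{C^\theta(\T^3)}$.

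\emph{The cutoff and the estimate.} Choose $\chi\in C_c^\infty(\R^3)$ with $0\le\chi\le1$, $\chi\equiv1$ on $\overline{B_6(0)}$ and $\supp\chi\subset B_7(0)$, and set
\[
\tilde u:=\curl(\chi A)=\chi\,\curl A+\nabla\chi\times A=\chi\,u+\nabla\chi\times A,
\]
extended by $0$. Then $\div\tilde u=\div\curl(\chi A)=0$ everywhere, $\supp\tilde u\subset\overline{B_7(0)}\subset B_{12}(0)$, and on $B_6(0)$ we have $\chi\equiv1$ and $\nabla\chi\equiv0$, so $\tilde u=\curl A=u$ there. Finally $\tilde u$ is the sum of two $C^\theta(\R^3)$ fields: $\chi u$, with $\|\chi u\|_{C^\theta(\R^3)}\le C\|\chi\|_{C^1}\|u\|_{C^\theta(\R^3)}\le C\|u\|_{C^\theta(\T^3)}$ (the periodic extension has $C^\theta$ norm on $\R^3$ comparable to the one on $\T^3$), and $\nabla\chi\times A$, with $\|\nabla\chi\times A\|_{C^\theta(\R^3)}\le C\|\chi\|_{C^1}\|A\|_{C^\theta(B_7(0))}\le C_\theta\|u\|_{C^\theta(\T^3)}$; adding these gives \eqref{ext_cont}.

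\emph{Main difficulty.} The one substantive ingredient is the gain of a derivative in the construction of the potential, i.e. the Schauder estimate $-\Delta W=v\in C^\theta(\T^3)\Rightarrow W\in C^{2,\theta}(\T^3)$ with the corresponding bound; alternatively one could solve $-\Delta W=v$ on a large ball by convolution against the Newtonian potential and argue identically, at the price of tracking supports more carefully. Everything else --- the identity $u=\curl(\chi A)$, the vanishing of $\nabla\chi$ on $B_6(0)$, and the two product estimates --- is routine.
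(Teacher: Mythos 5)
Your proposal is correct and follows essentially the same route as the paper: produce a vector potential $A$ with $u=\curl A$ and $\|A\|_{C^{1,\theta}}\leq C_\theta\|u\|_{C^\theta(\T^3)}$ via Schauder estimates, multiply by a cutoff equal to $1$ on $B_6(0)$, and take the curl so that solenoidality and the local identification with $u$ are automatic. The only difference is cosmetic: you handle the nonzero-mean part explicitly through $A_0=\tfrac12\,\bar u\times x$ and build $A_1=\curl W$ from $-\Delta W=v$, whereas the paper assumes zero mean and directly invokes the periodic vector potential; your version just fills in the details the paper leaves to the reader.
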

\begin{proof}
Assume for simplicity that $\int_{\T^3} u =0$ (the general case can be treated with a slight modification of the proof). Since $\div u=0$ on $\T^3$ then there exists a vector potential $A:\T^3 \rightarrow \R^3$ such that $u =\curl A$ and $-\Delta A= \curl  u$.
Moreover by Schauder estimates we have $\|A\|_{C^{1,\theta}( \T^3)} \leq C_\theta \|u\|_{C^{\theta}( \T^3)}$.  Now think $A$ to be defined periodically to the whole space $\R^3$. Choose a smooth cut-off function $0\leq\varphi \leq 1 $ such that $\supp\, \varphi \subset B_{12}$, $\varphi \equiv 1$ on $B_6$ and 
 $\|\varphi\|_{C^2}\leq C$
. Define $\tilde u := \curl \tilde A $ where $\tilde A := A \varphi_R\,$. Trivially $\div \tilde u=0$ and we also have the following  estimate
\[
\|\tilde v\|_ {C^\theta( \R^3)}\leq \|\tilde A\|_ {C^{1,\theta}( \R^3)}\leq C \|\varphi\|_ {C^{1,\theta}( \R^3)}\| A\|_ {C^{1,\theta}( \T^3)}\leq C_\theta \|u\|_{C^{\theta}( \T^3)}\,.
\]
Moreover $\tilde u$ satisfies  $\tilde u = \curl \tilde A = \curl A= u$ in $B_6(0)$.
\end{proof}
Note that the choice of $B_6(0)$ in the previous Lemma is to ensure that the cube (and thus the torus) $[-\pi,\pi]^3\subset B_6(0)$. Since we will work on functions $u,p$ that solve \eqref{laplace_p_solospazio} in $\T^3$, we can take the extension $\tilde{u}$ given by Lemma \ref{extension_lemma} and define $\tilde{p},\tilde q$ as 
$$
-\Delta \tilde p=\div \div (\tilde v \otimes \tilde w)\quad \text{in } \,\R^3\,.
$$
$$
-\Delta \tilde q=\div \div \div (\tilde v \otimes \tilde w\otimes \tilde z)\quad \text{in } \,\R^3\,.
$$
Thus we can write $p=p-\tilde p +\tilde p$ and $q=q-\tilde q+\tilde q$, where $\tilde p$ and $\tilde q$ satisfy Proposition \ref{t:2schau}, while $p-\tilde p$ and $q-\tilde q$ are harmonic in $B_6(0)$.
Thus we have  the following 
\begin{corollary}\label{to_torus}
If $v, w, z\in C^0(\T^3)$, then Proposition \ref{t:2schau} holds also for the (unique) zero-average solutions $p$ and $q$ of \eqref{laplace_p_solospazio} in $\T^3$.
\end{corollary}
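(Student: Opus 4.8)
The plan is to carry out rigorously the decomposition $p=(p-\tilde p)+\tilde p$, $q=(q-\tilde q)+\tilde q$ sketched above, thereby reducing everything to Proposition~\ref{t:2schau} on $\R^3$ together with interior regularity for harmonic functions. First I would apply Lemma~\ref{extension_lemma} to the divergence-free fields $v,w,z$, obtaining divergence-free extensions $\tilde v,\tilde w,\tilde z\in C^0(\R^3)$ compactly supported in $B_{12}(0)$, coinciding with $v,w,z$ on $B_6(0)$, and with $\|\tilde v\|_{C^{\beta}(\R^3)}\le C_{\beta}\|v\|_{C^{\beta}(\T^3)}$ (and likewise for the other fields and for the other H\"older norms entering Proposition~\ref{t:2schau}). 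Letting $\tilde p,\tilde q$ be the potential theoretic solutions of $-\Delta\tilde p=\div\div(\tilde v\otimes\tilde w)$ and $-\Delta\tilde q=\div\div\div(\tilde v\otimes\tilde w\otimes\tilde z)$ in $\R^3$, Proposition~\ref{t:2schau} bounds them precisely by the right-hand sides claimed in the corollary, with torus norms on the right thanks to the continuity estimate for the extension.

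Next, regarding the zero-average solutions $p,q$ on $\T^3$ as periodic functions on $\R^3$, the differences $h_1:=p-\tilde p$ and $h_2:=q-\tilde q$ are harmonic in $B_6(0)$, since the two Poisson equations have the same right-hand side there ($\tilde v\equiv v$, $\tilde w\equiv w$, $\tilde z\equiv z$ on $B_6(0)$). Because $[-\pi,\pi]^3\Subset B_6(0)$, interior estimates for harmonic functions give, for a fixed $r$ with $[-\pi,\pi]^3\subset B_r(0)\Subset B_6(0)$ and every $k\in\N$, the bound $\|h_j\|_{C^{k}(B_r(0))}\le C_{k,r}\|h_j\|_{H^{-1}(B_6(0))}$ (via $\psi_\rho*h_j=h_j$ for radial mollifiers, testing against the smooth compactly supported function $D^\alpha\psi_\rho(x-\cdot)\in H^1_0(B_6(0))$). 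It then remains to control $\|h_j\|_{H^{-1}(B_6(0))}$ by the right-hand sides of Proposition~\ref{t:2schau}: the pieces $\tilde p,\tilde q$ are harmless since they lie in $C^0(\R^3)\subset H^{-1}(B_6(0))$ with the desired bound, and for the periodic parts one uses Plancherel on $\T^3$. The multiplier $k_ik_j|k|^{-2}$ producing $p$ from $v\otimes w$ is bounded, so $\|p\|_{L^2(\T^3)}\le C\|v\|_{C^0}\|w\|_{C^0}$; the multiplier $k_ik_jk_k|k|^{-2}$ producing $q$ from $v\otimes w\otimes z$ grows linearly, so one only obtains $\|q\|_{H^{-1}(\T^3)}\le C\|v\|_{C^0}\|w\|_{C^0}\|z\|_{C^0}$, which is however already of the form present in the right-hand side of \eqref{eqn:q}. (One also needs $\|\cdot\|_{H^{-1}(B_6(0))}\le C\|\cdot\|_{H^{-1}(\T^3)}$ for periodic distributions, valid since $B_6(0)$ covers only a bounded number of copies of the torus.)

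Finally I would reassemble $p=h_1+\tilde p$ and $q=h_2+\tilde q$ on $B_r(0)$, which yields the estimates of Proposition~\ref{t:2schau} with all norms taken on $B_r(0)$; passing to torus norms is then routine, because once the $C^0(\T^3)$ bound is secured it suffices, for the H\"older (resp.\ gradient-H\"older) seminorm, to control difference quotients over pairs of points at torus-distance at most $6-r$, and such pairs admit lifts lying in $B_6(0)$, while for larger distances the quotient is dominated by twice the $C^0$ norm. I expect the only genuinely delicate point to be exactly this weak-norm bookkeeping for $q$: the direct Schauder estimate is unavailable because $\div\div\div(v\otimes w\otimes z)$ puts three derivatives on merely H\"older data, so one must route the a priori bound for $q$ through the negative Sobolev estimate above and recover a pointwise bound only a posteriori, using the smoothing of harmonic functions on $B_6(0)$.
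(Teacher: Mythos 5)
Your proposal is correct and takes essentially the same route as the paper: the paper's own argument is precisely the decomposition $p=(p-\tilde p)+\tilde p$, $q=(q-\tilde q)+\tilde q$ built from Lemma~\ref{extension_lemma}, with $p-\tilde p$ and $q-\tilde q$ harmonic in $B_6(0)$, and you additionally fill in the quantitative control of these harmonic remainders (interior estimates routed through an $H^{-1}$ bound, with the Fourier-multiplier estimate handling $q$, for which no direct Schauder bound is available) that the paper leaves implicit. The only cosmetic point is in the final bookkeeping: the lifted pairs of nearby points should be taken inside the ball $B_r(0)$ on which your interior estimates for $h_j$ hold (which works once $r$ is chosen close enough to $6$), not merely inside $B_6(0)$.
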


\subsection{proof of Theorem \ref{t:main}} 
\subsubsection{Time regularity of $u$} To prove \eqref{eqn:u-time}, it is enough to show that $u$ is $\theta-$H\"older in time, uniformly in space. For any $s,t \in[0,T]$ we estimate
\begin{equation}\label{split_u}
|u(t,x)-u(s,x)|\leq |u(t,x)-u_\delta(t,x)|+|u_\delta(t,x)-u_\delta(s,x)|+|u_\delta(s,x)-u(s,x)|\,.
\end{equation}
Using \eqref{mollest2} we get
\[
|u(t,x)-u_\delta(t,x)|\leq C \delta^\theta \|u\|_{L^\infty((0,T),C^\theta(\T^3)))} \qquad \forall t\in [0,T]\, ,
\]
thus we are only left with the second term in the right hand side of \eqref{split_u}. Using the equation \eqref{euler_moll},  the estimates \eqref{p_ctheta} and \eqref{mollest3}, Theorem \ref{lapla.holder}, we have
\begin{align*}
|u_\delta(t,x)-u_\delta(s,x)|&\leq |t-s|\|\partial_t u_\delta\|_{L^\infty((0,T)\times\T^3)}\\
&\leq |t-s| \big(\|\div (u\otimes u)_\delta\|_{L^\infty((0,T)\times\T^3)} +\|\nabla p_\delta\|_{L^\infty((0,T)\times\T^3)} + \nu\|(-\Delta)^\alpha u_\delta\|_{L^\infty((0,T)\times\T^3)} \big)\\
&\leq C |t-s| \big( \delta^{\theta-1}\|u\|^2_{L^\infty((0,T),C^\theta(\T^3))}+  \nu \| u_\delta \|_{L^\infty((0,T),C^{2\alpha+\eps}(\T^3))}\big) \,,
\end{align*}
Since $\alpha \in \big(0,\frac{1}{2}\big)$ we can choose $\varepsilon$ such that $2\alpha+\varepsilon <1$, getting
\begin{align*}
|u_\delta(t,x)-u_\delta(s,x)|\leq C  |t-s|  \delta^{\theta-1}\big( \|u\|^2_{L^\infty((0,T),C^\theta(\T^3))}+\nu  \| u \|_{L^\infty((0,T),C^\theta(\T^3))}\big) \,,
\end{align*}
Finally we choose $\delta=|t-s|$, from which the claim follows.

\subsubsection{Space regularity for $p$, for $\theta \in (0,1)$} Estimates \eqref{ts:pspace} and \eqref{ts:pspace-thetahigh} follow from Corollary \ref{to_torus}.

\subsubsection{Time regularity for $p$, for $\theta<\frac{1}{2}$} For any $s,t \in[0,T]$, such that $|t-s|=\delta<1$ we estimate via the triangular inequality and thanks to the space regularity of $p$ and  \eqref{mollest2} 
\begin{equation}\label{split_p}
\begin{split}
|p(t,x)-p(s,x)|&\leq2 \sup_{t\in [0,T]} |p(t,x)-p_\delta(t,x)|+|p_\delta(t,x)-p_\delta(s,x)|
\\&
\leq  C \delta^{2\theta} \|p\|_{L^\infty((0,T),C^{2\theta}(\T^3))} +|p_\delta(t,x)-p_\delta(s,x)| \\&
\leq C \delta^{2\theta} \|u\|^2_{L^\infty((0,T), C^\theta(\T^3))} +|p_\delta(t,x)-p_\delta(s,x)|
\,.
\end{split}
\end{equation}
To estimate the last term we consider the equation solved by $p_\delta$
$$ -\Delta p_\delta= \div \div ( ( u\otimes u)_\delta ) = \div \div(   u_\delta \otimes u_\delta-R_\delta )$$
and hence the one for $p_\delta(t,\cdot)-p_\delta(s,\cdot)$
\begin{equation}\label{ugly_p_moll}
\begin{split}
-\Delta ( &p_\delta(t,x) -p_\delta(s,x) ) = \div \div( R_\delta(s,x) - R_\delta(t,x) +  u_\delta(t,x) \otimes u_\delta(t,x) - u_\delta(s,x) \otimes u_\delta(s,x) )
\\&= \div \div \Big[ R_\delta(s,x) - R_\delta(t,x) + \int_s^t \Big( \frac{d}{dr}u_\delta(r,x) \otimes u_\delta(r,x)+ u_\delta(r,x) \otimes  \frac{d}{dr} u_\delta(r,x) \Big) \, dr \Big]
\\&= \div \div\Big[ R_\delta(s,x) - R_\delta(t,x) +
 \int_s^t \Big( (\div(u_\delta \otimes u_\delta)- \nabla p_\delta- \div R_\delta-\nu(-\Delta)^\alpha u_\delta) \otimes u_\delta
 \\&\qquad+ u_\delta \otimes  (\div(u_\delta \otimes u_\delta)- \nabla p_\delta- \div R_\delta-\nu(-\Delta)^\alpha u_\delta)  \Big) \, dr \Big]\,.
\end{split}
\end{equation}
Defining the commutator 
\begin{equation}\label{T_commutator}
T^\alpha (u_\delta) :=(-\Delta)^\alpha (u_\delta \otimes u_\delta)-(-\Delta)^\alpha u_\delta \otimes u_\delta - u_\delta\otimes (-\Delta)^\alpha u_\delta\,,
\end{equation}
and denoting by $p^1_{s,t}$, $p^2_{s,t}$, $p^3_{s,t}, p^4_{s,t}, p^5_{s,t}$ the unique $0$-average solutions of
$$-\Delta p^1_{s,t} = \div \div( R_\delta(s,x) - R_\delta(t,x))\,,$$
$$\Delta p^2_{s,t} = \int_s^t \div \div( (\div R_\delta+ \nabla p_\delta) \otimes u_\delta +u_\delta
 \otimes (\div R_\delta+ \nabla p_\delta) ) \, dr\,,$$
$$-\Delta p^3_{s,t} = \int_s^t \div \div( \div(u_\delta \otimes u_\delta) \otimes u_\delta +u_\delta
 \otimes \div(u_\delta \otimes u_\delta) ) \, dr\,,$$
 $$
 -\Delta p^4_{s,t}=\nu \int_s^t \div \div T^\alpha (u_\delta)\,dr\,,
 $$
 $$
 \Delta p^5_{s,t}=\nu \int_s^t \div \div (-\Delta)^\alpha (u_\delta\otimes u_\delta)\,dr\,,
 $$
we have that
$$p_\delta(t,x) -p_\delta(s,x) = p^1_{s,t}+ p^2_{s,t} +p^3_{s,t}+p^4_{s,t}+p^5_{s,t}\,.$$
By Schauder estimates, estimating $R_\delta$ by \eqref{mollest4} and $p_\delta$ by \eqref{mollest2} and thanks to the space regularity of $p$ proved above in  \eqref{ts:pspace}, $p^1_{s,t}$ and $p^2_{s,t}$ enjoy the estimate
$$\|p^1_{s,t} \|_{L^\infty(\T^3)} \leq \|p^1_{s,t} \|_{C^\eps(\T^3)} \leq C\big( \|R_\delta(t,\cdot)\|_{C^\eps(\T^3)} + \|R_\delta(s,\cdot)\|_{C^\eps(\T^3)}\big) \leq C \delta^{2 \theta-\eps} \|u\|^2_{L^\infty((0,T),C^\theta(\T^3))},$$
\begin{equation*}
\begin{split}
\|p^2_{s,t} \|_{L^\infty(\T^3)}& \leq \|p^2_{s,t} \|_{C^\eps(\T^3)} \leq C|t-s| \| (\div R_\delta+ \nabla p_\delta) \otimes u_\delta \|_{L^\infty((0,T),C^\eps(\T^3))}
 \\&
 \leq C |t-s| \Big(\| R_\delta\|_{L^\infty((0,T),C^{1,\eps}(\T^3))} + \| p_\delta\|_{L^\infty((0,T),C^{1,\eps}(\T^3))} \Big)\| u_\delta \|_{L^\infty((0,T),C^\eps(\T^3))} \\&
 \leq C |t-s|  \delta^{2 \theta-\eps-1} \|u\|^3_{L^\infty((0,T),C^\theta(\T^3))} \,.
\end{split}
\end{equation*}
Note that $p^3_{s,t}$ is the integral in time of $\Delta^{-1}\div \div \div(u_\delta \otimes u_\delta \otimes u_\delta ) $, which from Corollary~\ref{to_torus} and \eqref{mollest1} are controlled by 
\begin{equation}
\begin{split}
\|p^3_{s,t} \|_{L^\infty(\T^3)} \leq \|p^3_{s,t} \|_{C^\eps(\T^3)} &\leq C |t-s| \|u_\delta\|_{L^\infty((0,T)\times \T^3)}\|u_\delta\|^2_{L^\infty((0,T),C^{\frac{1+\varepsilon}{2}}(\T^3))}
 \\&
 \leq C |t-s|  \|u\|_{L^\infty((0,T)\times \T^3)} \big( \delta^{\theta-\frac{1+\varepsilon}{2}}\|u\|_{L^\infty((0,T),C^\theta(\T^3))}\big)^2 \\&
 \leq C |t-s| \delta^{2\theta-1-\epsilon}\|u\|^3_{L^\infty((0,T),C^\theta(\T^3))}\,.
\end{split}
\end{equation}
Choosing $\varepsilon$ such that $\frac{\varepsilon}{2}<\alpha$, by Schauder estimates and \eqref{est_comm_1} we have 
$$
\|p^4_{s,t}\|_{L^\infty(\T^3)}\leq \|p^4_{s,t}\|_{C^\varepsilon(\T^3)}\leq  C|t-s|\|T^\alpha (u_\delta)\|_{L^\infty((0,T),C^\varepsilon(\T^3))}\leq C |t-s| \|u_\delta\|^2_{{L^\infty((0,T),C^{\alpha+\frac{\varepsilon}{2}}(\T^3))}}\,.
$$
To estimate $p^5_{s,t}$ we note that every solution of $\Delta q=\div \div (u_\delta \otimes u_\delta)$ enjoy the estimate (by Proposition \ref{to_torus})
$
\|q\|_{L^\infty((0,T),C^{2\alpha+\varepsilon}(\T^3))}\leq C \|u_\delta\|^2_{L^\infty((0,T),C^{\alpha+\frac{\varepsilon}{2}}(\T^3))}\,,
$
and since $p^5_{s,t}=\int_s^t (-\Delta)^\alpha q\,dr$, by Theorem \ref{lapla.holder} 
we infer
$$
\|p^5_{s,t}\|_{L^\infty(\T^3)}\leq C |t-s| \|q\|_{L^\infty((0,T),C^{2\alpha+\varepsilon}(\T^3))}\leq C |t-s| \|u_\delta\|^2_{{L^\infty((0,T),C^{\alpha+\sfrac{\varepsilon}{2}}(\T^3))}}\,. 
$$
In the case $\alpha<\theta$, if $\varepsilon$ is sufficiently small, we have (since $\delta<1$)
$$
\|u_\delta\|^2_{{L^\infty((0,T),C^{\alpha+\sfrac{\varepsilon}{2}}(\T^3)}}\leq \|u\|^2_{{L^\infty((0,T),C^{\theta}(\T^3))}}\leq \delta^{2\theta-1}\|u\|^2_{{L^\infty((0,T),C^{\theta}(\T^3))}}\,,
$$
while, if $\alpha\geq \theta$, using \eqref{mollest1} we have
$$
\|u_\delta\|^2_{{L^\infty((0,T),C^{\alpha+\sfrac{\varepsilon}{2}}(\T^3))}}\leq \delta^{2(\theta-\alpha)-\varepsilon} \|u\|^2_{{L^\infty((0,T),C^{\theta}(\T^3))}}\leq \delta^{2\theta-1}\|u\|^2_{{L^\infty((0,T),C^{\theta}(\T^3))}}\,.
$$
Thus we deduce
$$
\|p^4_{s,t}\|_{L^\infty(\T^3)}+ \|p^5_{s,t}\|_{L^\infty(\T^3)}\leq C|t-s| \delta^{2\theta-1}\|u\|^2_{{L^\infty((0,T),C^{\theta}(\T^3))}}\,.
$$
Since $\delta=|t-s|$ we conclude that  $|p_\delta(t,x)-p_\delta(s,x)| \leq C|t-s|^{2\theta-\varepsilon}$, so that \eqref{ts:ptime} holds true.

\subsubsection{Time regularity for $\nabla p$, for $\theta>\frac{1}{2}$}
By the equation solved by $p$, for $0<s<t$ we have
\begin{align*}
-\Delta( p(t) - p(s) )&= \partial_{ij}^2 (u^i(t) u^j(t)-u^i(s) u^j(s)) 
\\&= \partial_{ij}^2 \big((u^i(t)- u^i(s)) u^j(t)+u^i(s) (u^j(t)-u^j(s))\big) 
\end{align*}
By Corollary \ref{to_torus} we can apply Proposition~\ref{t:2schau}(ii) with $\beta= 1-\theta+\e$ and $\gamma= \theta$ to obtain
$$\| \nabla p(t)- \nabla p(s)\|_{C^\e(\T^3)} \leq C \|u(t)-u(s)\|_{C^{1-\theta+\e}(\T^3)}\|u\|_{{L^\infty((0,T),C^{\theta}(\T^3))}}.
$$
Interpolating the $C^{1-\theta+\e}$-norm between $C^\theta$ and $C^0$ (since $\theta>\frac12$) and by the time regularity of $u$ in \eqref{eqn:u-time}, we have 
\begin{equation*}
\begin{split}
\|u(t)-u(s)\|_{C^{1-\theta+\e}(\T^3)}&\leq 
\|u(t)-u(s)\|_{C^{\theta}(\T^3)}^{\frac{1-\theta+\e}{\theta}}
\|u(t)-u(s)\|_{C^{0}(\T^3)}^{\frac{2\theta-1-\e}{\theta}}
\leq
\|u\|_{C^{\theta}([0,T]\times \T^3)}|t-s|^{2\theta-1-\e}
\\&\leq \big( \|u\|_{L^\infty((0,T), C^\theta(\T^3))} +\|u\|^2_{L^\infty((0,T), C^\theta(\T^3))}  \big) |t-s|^{2\theta-1-\e},
\end{split}
\end{equation*}
which proves that  for any $x\in \T^3$
\begin{equation}\label{ciao}
\big| \nabla p(t,x)-\nabla p(s,x) \big| \leq C |t-s|^{2\theta-1-\eps}
\end{equation}

\subsubsection{Space regularity for $\partial_t p$, for $\theta>\frac{1}{2}$} With the previous arguments, $p \in C^{0,1}([0,T] \times \T^3)$. Hence $\partial_t p \in L^\infty$ and we can look at the equation solved distributionally by it, obtained by differentiating in time \eqref{laplace_p}
$$-\Delta \partial_t p = \div \div \partial_t ( u \otimes u)\,.$$
Note that, defining $T^\alpha (u_\delta)$ as in \eqref{T_commutator}, for every $\delta >0$ we have
\begin{align*}
\partial_t ( u_\delta \otimes u_\delta )&=\partial_t u_\delta \otimes u_\delta +u_\delta \otimes \partial_t u_\delta \\&=- \div (u_\delta \otimes u_\delta \otimes u_\delta) + \div R_\delta \otimes u_\delta +u_\delta \otimes \div R_\delta  \\
&\quad \,-\nabla p_\delta \otimes u_\delta -u_\delta \otimes \nabla p_\delta+\nu T^\alpha ( u_\delta)-\nu(-\Delta)^\alpha(u_\delta \otimes u_\delta)\,
\end{align*}
and, since by \eqref{mollest4} $\div R_\delta \rightarrow 0$ uniformly and by Proposition \ref{commutator_alpha} $T^\alpha (u_\delta) \rightarrow T^\alpha (u)$ uniformly, we have that $\partial_t p$ solves distributionally
\begin{equation}
\label{eqn:partial-t-p}
\Delta \partial_t p= \div \div \div (u \otimes u\otimes u) +\div \div \big(2 \nabla p \otimes u -\nu T^\alpha (u) +\nu (-\Delta)^\alpha(u\otimes u)\big)\,.
\end{equation}
Hence we can write $\partial_t p = q^1 + q^2+q^3+q^4$, where 
$$\Delta q^1 = \div \div \div(u\otimes u \otimes u ) \qquad \qquad \Delta q^2 =2 \,\div \div (\nabla p \otimes u  )  $$
$$\Delta q^3 =\nu\, \div \div T^\alpha (u) 
 \qquad \qquad
 -\Delta q^4 =\nu \,\div \div(-\Delta)^\alpha(u\otimes u)$$
In turn by the estimate on $q= q^1$ from Corollary \ref{to_torus}
$$\|q^1(t)\|_{C^{2\theta-1}(\T^3)} \leq C \|u (t) \|_{C^\theta(\T^3)}^3$$
and by Schauder estimates, together with the regularity of $p$, we have
$$\|q^2(t)\|_{C^{2\theta-1}(\T^3)} \leq C \|\nabla p \otimes u\|_{C^{2\theta-1}(\T^3)}  \leq  C \|\nabla p (t) \|_{C^{2\theta-1}(\T^3)} \|u (t) \|_{C^\theta(\T^3)} \leq C \|u (t) \|_{C^\theta(\T^3)}^3\,.$$
Applying \eqref{est_comm_1} and \eqref{est_comm_2} with $\beta=2(\theta-\alpha)$ (choosing $\varepsilon$ small enough such that $\theta<1-\varepsilon$) and by Schauder estimates we deduce
$$
\|q^3(t)\|_{C^{2\theta-1}(\T^3)}\leq C\|T^\alpha (u)(t)\|_{C^{2\theta-1}(\T^3)}\leq C \|u(t)\|^2_{C^\theta(\T^3)}\leq C \|u(t)\|^2_{C^\theta(\T^3)}\,.
$$
Notice that $q^4=\nu(-\Delta)^\alpha p$, thus by \eqref{ts:pspace-thetahigh} we have
$$
\|q^4(t)\|_{C^{2\theta-1}(\T^3)}\leq C \|p(t)\|_{C^{1,2\theta-1}(\T^3)}\leq C \|u(t)\|^2_{C^\theta(\T^3)}\,.
$$
\subsubsection{Time regularity for $\partial_t p$, for $\theta>\frac{1}{2}$}

For any $0<s<t$, by the equation for $\partial_t p$ in \eqref{eqn:partial-t-p}, we have that $\partial_t p(t,x) -\partial_t p(s,x) = p^1_{s,t}+ p^2_{s,t} +p^3_{s,t}
$
where $p^1_{s,t}, p^2_{s,t}$, and $p^3_{s,t}$ are the unique $0$-average solutions in $\T^3$ of
\begin{equation*}
\begin{split}
\Delta p^1_{s,t} &= \div \div \div (u(t) \otimes u(t)\otimes u(t)-u(s) \otimes u(s)\otimes u(s))
\\&= \div \div \div \big(( u(t)-u(s)) \otimes u(t)\otimes u(t)+ u(s) \otimes( u(t)-u(s))\otimes u(t)\\
&\quad \,+ u(s) \otimes u(s)\otimes (u(t)- u(s))\big)
\end{split}
\end{equation*}
\begin{equation*}
\begin{split}
\Delta p^2_{s,t} =\, \div \div \big(2\,\nabla p(t) \otimes u(t)- 2\,\nabla p(s) \otimes u(s)  -\nu T^\alpha (u(t)) +\nu T^\alpha (u(s)) \big)
\end{split}
\end{equation*}
$$\Delta p^3_{s,t} =\nu (-\Delta)^\alpha \div \div \big(u(t)\otimes u(t)-u(s)\otimes u(s)\big)\,.$$
To estimate $p^1_{s,t}$, for any $\e$ small we apply Corollary \ref{to_torus}, with particular reference to Proposition~\ref{t:2schau}(iii), with $\beta= 1-\theta+\e$ and $\gamma=\theta$, in such a way that the factor $u(t)-u(s)$ gets each time only the $C^{1-\theta+\e}$ norm and not the $C^\theta$ norm. We obtain that
$$\|p^1_{s,t}\|_{L^\infty} \leq \|p^1_{s,t}\|_{C^\e} \leq 
C \|u(t)-u(s)\|_{C^{1-\theta+\e}(\T^3)} \|u\|^2_{L^\infty((0,T),C^{\theta}(\T^3))}.
$$
Next, we interpolate the $C^{1-\theta+\e}$ norm between $C^0$ and $C^\theta$ and finally we use the $C^\theta$ regularity in time of $u$ to obtain that
\begin{equation*}
\begin{split}
\|u(t)-u(s)\|_{C^{1-\theta+\e}(\T^3)} &\leq C\|u(t)-u(s)\|_{C^{\theta}(\T^3)}^{\frac{1-\theta+\e}\theta}\|u(t)-u(s)\|_{C^{0}(\T^3)}^{{\frac{2\theta-1-\e}\theta}}
\\& \leq C\|u\|_{L^\infty((0,T),C^{\theta}(\T^3))}^{\frac{1-\theta+\e}\theta}|t-s|^{2\theta-1-\e}\|u\|_{C^{\theta}([0,T]\times \T^3)}^{{\frac{2\theta-1-\e}\theta}}\\
& \leq C |t-s|^{2\theta-1-\e}\|u\|_{C^{\theta}([0,T]\times \T^3)}.
\end{split}
\end{equation*}
Notice that 
$$
T^\alpha(u(t))-T^\alpha(u(s))=T^\alpha \big( u(t)-u(s),u(t)\big)+T^\alpha\big( u(s), u(t)-u(s) \big)\,.
$$
Now if $2\alpha>\theta$ we use part (i) of Theorem \ref{commutator_alpha} with $k_1=2\alpha-\theta+\frac{\varepsilon}{2}$, $k_2=\theta-\frac{\varepsilon}{2}$, $\beta=\varepsilon$ and we get
$$
\|T^\alpha \big( u(t)-u(s),u(t)\big)\|_{C^\varepsilon(\T^3)}\leq C\| u(t)-u(s)\|_{C^{2\alpha-\theta+\varepsilon}(\T^3)}\|u(t)\|_{C^\theta(\T^3)}\,,
$$
while if $2\alpha\leq \theta$ we choose $k_1=\varepsilon$, $k_2=2\alpha-\varepsilon$, $\beta=\varepsilon$, getting
\begin{align*}
\|T^\alpha \big( u(t)-u(s),u(t)\big)\|_{C^\varepsilon(\T^3)}&\leq C \| u(t)-u(s)\|_{C^{3\sfrac{\varepsilon}{2}}(\T^3)}\|u(t)\|_{C^{2\alpha-\sfrac{\varepsilon}{2}}(\T^3)}\\
&\leq C \| u(t)-u(s)\|_{C^{2\varepsilon}(\T^3)}\|u(t)\|_{C^{\theta}(\T^3)}\,.
\end{align*}
Using again the H\"older regularity of $u$ in time, we obtain
\begin{align*}
\|T^\alpha \big( u(t)-u(s),u(t)\big)\|_{C^{\sfrac{\varepsilon}{2}}(\T^3)}\leq C |t-s|^{2\theta-1-\varepsilon}\|u\|_{C^\theta([0,T]\times \T^3)}\|u\|_{L^\infty((0,T),C^\theta(\T^3))}\,.
\end{align*}
Summarizing we achieved
\begin{equation}\label{comm_tempo}
\| T^\alpha(u(t))-T^\alpha(u(s)) \|_{C^{\sfrac{\varepsilon}{2}}(\T^3)}\leq C |t-s|^{2\theta-1-\varepsilon} \|u\|_{C^\theta([0,T]\times \T^3)}\|u\|_{L^\infty((0,T),C^\theta(\T^3))}\,.
\end{equation}
Moreover by interpolation we estimate
\begin{align}\label{press_tempo}
 \|\nabla p(t) \otimes u(t)- \nabla p(s) \otimes u(s)  \|_{C^{\sfrac{\eps}{2}}(\T^3)}&\leq\|\nabla p(t) \otimes u(t)- \nabla p(s) \otimes u(s)  \|_{C^0(\T^3)}^{1-\frac{\varepsilon}{2(2\theta-1-\sfrac{\e}{2})}} \nonumber \\ & \,\quad \, \|\nabla p(t) \otimes u(t)
 - \nabla p(s) \otimes u(s)  \|_{C^{2\theta-1-\sfrac{\e}{2}}(\T^3)}^{\frac{\varepsilon}{2(2\theta-1-\sfrac{\e}{2})}}\nonumber \\
 &\leq C |t-s|^{2\theta-1-\varepsilon} \|\nabla p\|_{C^{2\theta-1-\sfrac{\varepsilon}{2}}([0,T]\times\T^3)} \|u\|_{C^\theta([0,T]\times\T^3)}\,.
\end{align}
By Schauder estimates, together with \eqref{comm_tempo} and \eqref{press_tempo}, we conclude
$$ \|p^2_{s,t} \|_{C^{\sfrac{\eps}{2}}(\T^3)} \leq C|t-s|^{2\theta-1-\varepsilon}\,.
$$
We note that  $p^3_{s,t}=-\nu (-\Delta)^\alpha (p(t)-p(s))$, thus by Theorem \ref{lapla.holder} and  \eqref{ciao} we have
$$
\|p^3_{s,t}\|_{C^\varepsilon(\T^3)}\leq C \|p(t)-p(s)\|_{C^1(\T^3)}\leq C|t-s|^{2\theta-1-\varepsilon}\,.
$$

$$
[\partial_t p(x)]_{C^{2\theta-1-\varepsilon}([0,T])}\leq C \|u\|^3_{L^\infty((0,T),C^\theta(\T^3))}\,.
$$
\subsection{proof of Theorem \ref{t:second}} 
\subsubsection{The case $\nu=0$ (Euler)}Let $s,t\in [0,T]$.  We wish to find a proper estimate for $|e_u(t)-e_u(s)|$. To do this we split it in three terms as follows
\begin{equation}\label{split_energy}
|e_u(t)-e_u(s)|\leq |e_u(t)-e_{u_\delta}(t)|+|e_{u_\delta}(t)-e_{u_\delta}(s)|+|e_{u_\delta}(s)-e_u(s)|\,,
\end{equation}
for some parameter $\delta>0$ that will be fixed at the end of the proof. Assume that $u\in L^\infty((0,T),B^\theta_{3,\infty}(\T^3))$ (the case $u\in L^\infty((0,T),W^{\theta,3}(\T^3))$ is analogous). Using \eqref{mean_moll} and \eqref{est4} with $p=\frac{3}{2}$ we can estimate
\[
|e_u(t)-e_{u_\delta}(t)|\leq \frac{1}{2} \int_{\T^3} \big| (|u|^2)_\delta-|u_\delta|^2\big| (x,t)\,dx \leq C \delta^{2\theta} [u]^2_{L^\infty((0,T),B^\theta_{3,\infty}(\T^3))}\,.
\]
We are now left with the second therm in the right hand side of \eqref{split_energy}. By \eqref{en_moll_eq} we get
\[
|e_{u_\delta}(t)-e_{u_\delta}(s)|\leq |t-s| \bigg\| \frac{d e_\delta}{dt}\bigg\|_{L^\infty(0,T)}\leq C |t-s| \|R_\delta\|_{L^\infty((0,T),L^{\sfrac{3}{2}}(\T^3))}\|\nabla u_\delta\|_{L^\infty((0,T),L^3(\T^3))}\,,
\]
and using \eqref{est1} and \eqref{est4} we obtain
\[
|e_{u_\delta}(t)-e_{u_\delta}(s)|\leq C|t-s| \delta^{3\theta -1}[u]^3_{L^\infty((0,T),B^\theta_{3,\infty}(\T^3))}\,.
\]
Thus we have achieved
\[
|e_u(t)-e_u(s)|\leq C \big( [u]^2_{L^\infty((0,T),B^\theta_{3,\infty}(\T^3))}+[u]^3_{L^\infty((0,T),B^\theta_{3,\infty}(\T^3))} \big)\big(\delta^{2\theta} +|t-s| \delta^{3\theta -1}\big)\,,
\]
from which choosing $\delta=|t-s|^\frac{1}{1-\theta}$ we conclude
\[
|e_u(t)-e_u(s)|\leq C\big(    [u]^2_{L^\infty((0,T),B^\theta_{3,\infty}(\T^3))}+[u]^3_{L^\infty((0,T),B^\theta_{3,\infty}(\T^3))} \big)|t-s|^\frac{2\theta}{1-\theta}\,.
\]
\subsubsection{The case $\nu>0$ (Hypodissipative Navier-Stokes)} We assume that $u\in L^\infty((0,T),B^\theta_{3,\infty}(\T^3))$ and we spilt
\begin{equation}\label{split_energy}
|E_u(t)-E_u(s)|\leq |E_u(t)-E_{u_\delta}(t)|+|E_{u_\delta}(t)-E_{u_\delta}(s)|+|E_{u_\delta}(s)-E_u(s)|\,,
\end{equation}
Using \eqref{mean_moll} and \eqref{est4} with $p=\frac{3}{2}$ we can estimate
\begin{align*}
|E_u(t)-E_{u_\delta}(t)| &\leq \frac{1}{2} \int_{\T^3} \big| (|u|^2)_\delta-|u_\delta|^2\big| (x,t)\,dx+\nu\int_0^t  \int_{\T^3} \big| (|(-\Delta)^{\sfrac{\alpha}{2}} u|^2)_\delta-|(-\Delta)^{\sfrac{\alpha}{2}}u_\delta|^2\big| (x,r)\,dxdr
\end{align*}
Using  \eqref{est4} with $p=\frac{3}{2}$ and with $p=1$ we have respectively
$$
\frac{1}{2} \int_{\T^3} \big| (|u|^2)_\delta-|u_\delta|^2\big| (x,t)\,dx \leq C \delta^{2\theta}[u]^2_{L^\infty((0,T),B^\theta_{3,\infty}(\T^3))}\,,
$$
$$
\int_0^t  \int_{\T^3} \big| (|(-\Delta)^{\sfrac{\alpha}{2}} u|^2)_\delta-|(-\Delta)^{\sfrac{\alpha}{2}}u_\delta|^2\big| (x,r)\,dxdr\leq C \delta^{2(\theta-\alpha)} [(-\Delta)^{\sfrac{\alpha}{2}}u]^2_{L^\infty((0,T),B^{\theta-\alpha}_{2,\infty}(\T^3))}\,.
$$
Since $\|(-\Delta)^{\sfrac{\alpha}{2}}u(t)\|_{W^{k,2}(\T^3)}\leq \|u(t)\|_{W^{\alpha+k,2}(\T^3)}$ for both $k=0, 1$, by interpolation we also get $[(-\Delta)^{\sfrac{\alpha}{2}}u(t)]_{B^{\theta-\alpha}_{2,\infty}(\T^3)}\leq \|u(t)\|_{B^{\theta}_{2,\infty}(\T^3)}$.
Thus we have achieved
$$
|E_u(t)-E_{u_\delta}(t)| \leq C\delta^{2(\theta-\alpha)}[u]^2_{L^\infty((0,T),B^\theta_{3,\infty}(\T^3))}\,.
$$
Note that the second term in the right hand side of \eqref{split_energy} is estimated by the same expression for the case $\nu=0$, thus we get
\[
|E_{u_\delta}(t)-E_{u_\delta}(s)|\leq |t-s| \bigg\| \frac{d E_{u_\delta}}{dt}\bigg\|_{L^\infty(0,T)}\leq C|t-s| \delta^{3\theta -1}[u]^3_{L^\infty((0,T),B^\theta_{3,\infty}(\T^3))}\,.
\]
Thus we have obtained
\[
|E_u(t)-E_u(s)|\leq C \big( [u]^2_{L^\infty((0,T),B^\theta_{3,\infty}(\T^3))}+[u]^3_{L^\infty((0,T),B^\theta_{3,\infty}(\T^3))} \big)\big(\delta^{2(\theta-\alpha)} +|t-s| \delta^{3\theta -1}\big)\,,
\]
from which choosing $\delta=|t-s|^\frac{1}{1-3\theta+2(\theta-\alpha)}$ we conclude the validity of \eqref{eqn:ts-energ}.

\appendix

\section{Mollification estimates}\label{A:moll}
Note that a direct consequence of the definition is that the mollification preserves the mean
\begin{equation}\label{mean_moll}
\int_{\T^3}f(x)\,dx=\int_{\T^3}f_\delta(x)\,dx \qquad \forall \delta>0\,.
\end{equation}
In the next proposition we collect some elementary estimates on these regularized functions. The simbol $\star$ is used to denote both the tensor and the scalar product.
\begin{proposition}\label{p:moll}
For any $f: \mathbb{T}^3 \rightarrow \mathbb{R}^3$, $\theta \in(0,1)$ and $N\geq 0$ we have:
\begin{align}
  \| f_\delta \star f_\delta -(f \star f)_\delta \|_{C^N} &\leq C \delta^{2\theta-N} [f]^2_{C^\theta}\,,\label{mollest4} \\
  \| \ f_\delta\|_{C^{N+\theta}} &\leq C \delta^{-N} [f]_{C^{\theta}} \, ,\label{mollest1} \\
\| \ f_\delta\|_{C^{N+1}} &\leq C \delta^{\theta-N-1} [f]_{C^{\theta}} \, ,\label{mollest3} \\
 \|f_\delta -f \|_{C^0} &\leq C \delta^{\theta} [f]_{C^{\theta}}\,.  \label{mollest2}
\end{align}
for a constant $C$ depending only on $N$ (but not on $\theta$).
\end{proposition}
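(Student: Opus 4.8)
The plan is to read off every one of the four bounds from two elementary properties of the rescaled kernel. The change of variables $y\mapsto\delta y$ gives $D^\beta\rho_\delta(y)=\delta^{-3-|\beta|}(D^\beta\rho)(y/\delta)$, hence for every $a\ge 0$
\[
\int_{\R^3}|D^\beta\rho_\delta(y)|\,|y|^{a}\,dy=\delta^{\,a-|\beta|}\int_{\R^3}|D^\beta\rho(w)|\,|w|^{a}\,dw=C_{\beta,a}\,\delta^{\,a-|\beta|},
\]
and, since $\int\rho=1$, one has $\int_{\R^3}D^\beta\rho_\delta=0$ whenever $|\beta|\ge 1$. Together with $f_\delta(x)=\int\rho_\delta(y)f(x-y)\,dy$ this yields the increment representations
\[
f_\delta(x)-f(x)=\int_{\R^3}\rho_\delta(y)\big(f(x-y)-f(x)\big)\,dy,
\qquad
D^\beta f_\delta(x)=\int_{\R^3}D^\beta\rho_\delta(y)\big(f(x-y)-f(x)\big)\,dy \ \ (|\beta|\ge 1),
\]
which are the only place where the Hölder regularity of $f$ enters; everything else is scaling.

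From the first representation and $|f(x-y)-f(x)|\le[f]_{C^\theta}|y|^\theta$ one gets $\|f_\delta-f\|_{C^0}\le[f]_{C^\theta}\int\rho_\delta(y)|y|^\theta\,dy\le C\delta^\theta[f]_{C^\theta}$, which is \eqref{mollest2}. Inserting the same Hölder bound into the second representation with $|\beta|=N+1$ (and with all lower orders) and using the scaling identity with $a=\theta$ gives $\|D^\beta f_\delta\|_{C^0}\le C\delta^{\theta-|\beta|}[f]_{C^\theta}$; summing over $|\beta|\le N+1$, the top order dominating since $\delta\le 1$ in the relevant range, yields \eqref{mollest3}, and the same pointwise bound also controls the $C^N$ part of \eqref{mollest1}. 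For the $C^\theta$ seminorm of the top-order derivatives in \eqref{mollest1} I would interpolate, using the elementary inequality $[g]_{C^\theta}\le C\|g\|_{C^0}^{1-\theta}\|g\|_{C^1}^{\theta}$, to get $[D^\beta f_\delta]_{C^\theta}\le C(\delta^{\theta-N}[f]_{C^\theta})^{1-\theta}(\delta^{\theta-N-1}[f]_{C^\theta})^{\theta}=C\delta^{-N}[f]_{C^\theta}$; together with the trivial $[f_\delta]_{C^\theta}\le[f]_{C^\theta}$ this closes \eqref{mollest1}.

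The commutator estimate \eqref{mollest4} is the only one needing care. I would start from the Constantin--E--Titi identity
\[
(f\star f)_\delta-f_\delta\star f_\delta=\int_{\R^3}\rho_\delta(y)\big(f(\cdot-y)-f\big)\star\big(f(\cdot-y)-f\big)\,dy-\big(f_\delta-f\big)\star\big(f_\delta-f\big),
\]
which already gives $N=0$ via \eqref{mollest2}. For $N\ge 1$ the increment factors cannot be differentiated, so the plan is instead to differentiate $R_\delta:=(f\star f)_\delta-f_\delta\star f_\delta$ in convolution form, letting every derivative land on $\rho_\delta$: in $D^\beta\big((f\star f)_\delta\big)=(D^\beta\rho_\delta)\ast(f\star f)$ write $f(x-y)=f(x)+(f(x-y)-f(x))$, kill the constant term by the vanishing moment, and identify the two cross terms as $f\star D^\beta f_\delta+D^\beta f_\delta\star f$; then expand $D^\beta(f_\delta\star f_\delta)$ by Leibniz and peel off the $\gamma\in\{0,\beta\}$ terms $f_\delta\star D^\beta f_\delta+D^\beta f_\delta\star f_\delta$. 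The dangerous top-order pieces cancel in pairs, leaving
\[
D^\beta R_\delta=(f-f_\delta)\star D^\beta f_\delta+D^\beta f_\delta\star(f-f_\delta)+\int_{\R^3}D^\beta\rho_\delta(y)\big(f(\cdot-y)-f\big)\star\big(f(\cdot-y)-f\big)\,dy-\sum_{0<\gamma<\beta}\binom{\beta}{\gamma}D^\gamma f_\delta\star D^{\beta-\gamma}f_\delta,
\]
and every term here is $\le C\delta^{2\theta-N}[f]^2_{C^\theta}$: the first two by \eqref{mollest2} and $\|D^\beta f_\delta\|_{C^0}\le C\delta^{\theta-N}[f]_{C^\theta}$; the integral by $|f(x-y)-f(x)|^2\le[f]^2_{C^\theta}|y|^{2\theta}$ and the scaling identity with $a=2\theta$; and each product in the sum because $\|D^\gamma f_\delta\|_{C^0}\|D^{\beta-\gamma}f_\delta\|_{C^0}\le C\delta^{2\theta-|\gamma|-|\beta-\gamma|}[f]^2_{C^\theta}$ with $|\gamma|+|\beta-\gamma|=N$.

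The step I expect to be delicate is exactly this rearrangement. A naive differentiation of $R_\delta$ produces, from $(D^\beta\rho_\delta)\ast(f\star f)$ and from the extreme Leibniz terms of $D^\beta(f_\delta\star f_\delta)$, contributions of size $\delta^{\theta-N}\|f\|_{C^0}[f]_{C^\theta}$ — one power of $\delta^\theta$ short of the claim, and with $\|f\|_{C^0}$ in place of the second factor of $[f]_{C^\theta}$; it is only their exact cancellation that restores the gain, so the heart of the proof is the bookkeeping that exhibits this cancellation while checking that no derivative is ever forced onto the merely Hölder field $f$.
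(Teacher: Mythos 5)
The paper offers no proof of Proposition \ref{p:moll}: it is stated in Appendix \ref{A:moll} as a collection of standard mollification estimates, so there is no argument of the authors to compare against line by line. Your proof is correct and is the standard one — kernel scaling, the vanishing moments of $D^\beta\rho_\delta$, the Constantin--E--Titi identity for the $C^0$ commutator bound, and the Leibniz expansion with cancellation of the top-order cross terms for $|\beta|=N\ge 1$; the formula you derive for $D^\beta R_\delta$ and the resulting $\delta^{2\theta-N}[f]^2_{C^\theta}$ bound are exactly what the body of the paper uses when invoking \eqref{mollest4}. One caveat, which concerns the statement rather than your argument: with the paper's convention that $\|\cdot\|_{C^{N+\theta}}$ and $\|\cdot\|_{C^{N+1}}$ include the $C^0$ norm, the left-hand sides of \eqref{mollest1} and \eqref{mollest3} cannot literally be controlled by the seminorm $[f]_{C^\theta}$ alone (take $f$ a nonzero constant), so these estimates must be read for the derivative and H\"older-seminorm parts — which is precisely what your interpolation step and pointwise bounds establish, and what is actually used in the proofs of the main theorems.
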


We remark that similar estimates also hold for more general spaces.
\begin{proposition}
For any $1\leq p <\infty$ there exists a constant $C:=C(p)$ such that for any $f: \mathbb{T}^3 \rightarrow \mathbb{R}^3$ and for any $\theta\in (0,1)$ we have
\begin{align}
\| \nabla f_\delta\|_{L^p} &\leq C \delta^{\theta-1} \min\big\{ [f]_{W^{\theta,p}},   [f]_{B^\theta_{p,\infty}}\big\} \, ,\label{est1} \\
 \| f_\delta \star f_\delta -(f\star f)_\delta \|_{L^p} &\leq C \delta^{2\theta} \min\big\{ [f]^2_{W^{\theta,2p}} [f]^2_{B^\theta_{2p,\infty}} \big\} 
\,. \label{est4}
\end{align}
\end{proposition}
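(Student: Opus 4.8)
The plan is to deduce both bounds from the single elementary fact that, for a translation $\tau_\xi f(x):=f(x-\xi)$ with $|\xi|\le\delta<1$, one has $\|\tau_\xi f-f\|_{L^q(\T^3)}\le|\xi|^\theta[f]_{B^\theta_{q,\infty}(\T^3)}\le\delta^\theta[f]_{B^\theta_{q,\infty}(\T^3)}$ directly from the definition of the Besov seminorm, whereas the Gagliardo seminorm controls, by its very definition (after the change of variables $y=x-\xi$, noting that for $|\xi|<1$ the torus distance equals $|\xi|$), only the weighted integral $\big(\int_{B_\delta(0)}|\xi|^{-\theta q-3}\|\tau_\xi f-f\|_{L^q}^q\,d\xi\big)^{1/q}\le[f]_{W^{\theta,q}}$. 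Thus the Besov halves of \eqref{est1} and \eqref{est4} will be immediate, while the Sobolev halves will follow by one application of H\"older's inequality, the mollifier supplying the extra integration in $\xi$ needed to reconstruct the double integral.

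For \eqref{est1}: since $\int_{\R^3}\nabla\rho_\delta\,dx=0$, write $\nabla f_\delta(x)=\int_{B_\delta(0)}\nabla\rho_\delta(\xi)\,(\tau_\xi f-f)(x)\,d\xi$; taking $L^p_x$-norms and using Minkowski's integral inequality gives $\|\nabla f_\delta\|_{L^p}\le\int_{B_\delta(0)}|\nabla\rho_\delta(\xi)|\,\|\tau_\xi f-f\|_{L^p}\,d\xi$. The Besov bound follows from $\int|\nabla\rho_\delta|\le C\delta^{-1}$. For the Sobolev bound, insert $1=|\xi|^{\theta+3/p}\cdot|\xi|^{-\theta-3/p}$ and apply H\"older with exponents $p$ and $p'$: because $(\theta+\tfrac3p)p=\theta p+3$, the first factor is $\le[f]_{W^{\theta,p}}$, while $\int_{B_\delta}|\xi|^{(\theta+3/p)p'}\,d\xi\simeq\delta^{\theta p'+3p'}$ contributes $\delta^{\theta+3}$ after taking the $1/p'$ power; multiplying by the bound $|\nabla\rho_\delta|\le C\delta^{-4}\mathbf{1}_{B_\delta}$ produces the claimed $\delta^{\theta-1}$.

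For \eqref{est4}: expand $f(x-\xi)=f(x)+(\tau_\xi f-f)(x)$ inside both $(f\star f)_\delta$ and $f_\delta\star f_\delta$; the terms linear in the increment coincide, and with $g:=f_\delta-f=\int\rho_\delta(\xi)(\tau_\xi f-f)\,d\xi$ one is left with the Constantin--E--Titi identity \cite{CoETi1994}, $(f\star f)_\delta(x)-f_\delta(x)\star f_\delta(x)=\int\rho_\delta(\xi)(\tau_\xi f-f)(x)\star(\tau_\xi f-f)(x)\,d\xi-g(x)\star g(x)$. Since $\rho_\delta\,d\xi$ is a probability measure, Jensen's inequality bounds $|g\star g|\le\int\rho_\delta|\tau_\xi f-f|^2$, so pointwise $|(f\star f)_\delta-f_\delta\star f_\delta|\le 2\int\rho_\delta(\xi)|\tau_\xi f-f|^2\,d\xi$. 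Taking $L^p_x$-norms and using Minkowski's integral inequality yields $\le 2\int\rho_\delta(\xi)\|\tau_\xi f-f\|_{L^{2p}}^2\,d\xi$; the Besov bound is now immediate, and for the Sobolev bound one repeats the H\"older argument of the previous paragraph applied to $\|\tau_\xi f-f\|_{L^{2p}}^2=|\xi|^{2\theta+3/p}\cdot\big(|\xi|^{-2\theta-3/p}\|\tau_\xi f-f\|_{L^{2p}}^2\big)$, using $(2\theta+\tfrac3p)p=2\theta p+3$ to recover $[f]_{W^{\theta,2p}}^2$ and $\int_{B_\delta}|\xi|^{(2\theta+3/p)p'}d\xi\simeq\delta^{2\theta p'+3p'}$ together with $\rho_\delta\le C\delta^{-3}\mathbf{1}_{B_\delta}$ to produce the final $\delta^{2\theta}$.

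The only point that requires care is precisely this last book-keeping: one must \emph{not} bound $\|\tau_\xi f-f\|_{L^q}$ pointwise in $\xi$ by $[f]_{W^{\theta,q}}|\xi|^\theta$ (which is false, since $W^{\theta,q}\subsetneq B^\theta_{q,\infty}$), and should instead exploit that the convolution against $\rho_\delta$ already integrates $\xi$ over $B_\delta$, so that a single H\"older inequality with the exponent split $(p,p')$ converts it into the Gagliardo double integral with exactly the right surplus power of $\delta$. Everything else is the classical commutator computation together with Jensen's and Minkowski's inequalities.
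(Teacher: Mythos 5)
Your proof is correct, and it is the standard Constantin--E--Titi-type argument that the paper itself leaves unproved (the proposition is stated in Appendix~A without proof), so there is nothing in the paper to diverge from: the identity $(f\star f)_\delta-f_\delta\star f_\delta=\int\rho_\delta(\xi)\,(\tau_\xi f-f)\star(\tau_\xi f-f)\,d\xi-(f_\delta-f)\star(f_\delta-f)$, Minkowski/Jensen, and the cancellation $\int\nabla\rho_\delta=0$ for \eqref{est1} are exactly the expected ingredients, and your bookkeeping of exponents is right: $(\theta+\tfrac3p)p'+3=\theta p'+3p'$ gives $\delta^{\theta+3}$ against $\|\nabla\rho_\delta\|_{C^0}\leq C\delta^{-4}$, and $(2\theta+\tfrac3p)p'+3=2\theta p'+3p'$ gives $\delta^{2\theta+3}$ against $\|\rho_\delta\|_{C^0}\leq C\delta^{-3}$, with constants depending only on $p$ as required. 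One inaccuracy in your closing remark: the pointwise-in-$\xi$ bound $\|\tau_\xi f-f\|_{L^q}\leq C_q|\xi|^\theta[f]_{W^{\theta,q}}$ is in fact \emph{true} (compare both $f$ and $\tau_\xi f$ with the average of $f$ on a ball of radius $|\xi|$ about the midpoint; this shows $[f]_{B^\theta_{q,\infty}}\leq C_q[f]_{W^{\theta,q}}$ with a constant uniform in $\theta$, i.e.\ $W^{\theta,q}\hookrightarrow B^\theta_{q,\infty}$ at the level of seminorms); the strictness of the inclusion of spaces does not contradict this. So your H\"older-in-$\xi$ step is a valid alternative but not forced: one could simply reduce the $W^{\theta,p}$ (resp.\ $W^{\theta,2p}$) halves of \eqref{est1} and \eqref{est4} to the Besov halves via this embedding. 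This does not affect the validity of your argument, which stands as written (up to the harmless restriction $\delta<1$ so that the periodic distance of $x$ and $x-\xi$ equals $|\xi|$ for $|\xi|\leq\delta$, the regime in which the estimates are used).
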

%
%
\section{Fractional Laplacian}
Let $\alpha \in \big(0,\frac{1}{2}\big)$. We racall that for a regular function $f:\T^3\rightarrow\R^3$ the operator $(-\Delta)^\alpha$ is given by (see \cite{RS2016} for a more precise discussion)
$$
(-\Delta)^\alpha f(x)= C_\alpha\sum_{k\in \mathbb{Z}^3} \int_{\T^3} \frac{f(x)-f(y)}{|x-y-2\pi k|^{3+2\alpha}}\,dy\,.
$$
If we identify the torus with the $3-$dimensional cube $[0,2\pi]^3$ and we think $f$ to be defined periodically on $\R^3$ we can rewrite the previous formula as
\begin{equation}\label{lapl_global}
(-\Delta)^\alpha f(x)= C_\alpha \int_{\R^3} \frac{f(x)-f(y)}{|x-y|^{3+2\alpha}}\,dy\,,
\end{equation}
which is a well defined integral whenever $f\in C^\theta(\T^3)$ with $\theta>2\alpha$. We have the following (see \cite[Theorem B.1]{DR2018} and \cite[Theorem 1.4 ]{RS2016} for the original version)
\begin{theorem}\label{lapla.holder}
Let $\alpha, \varepsilon>0$ and $\beta\geq0$  such that $2\alpha+\beta+\varepsilon\leq 1$, and let $f:\mathbb{T}^3 \rightarrow \mathbb{R}^3$. If  $f \in C^{2 \alpha +\beta+ \varepsilon} (\T^3)$,  then $(-\Delta)^\alpha f \in C^\beta(\T^3)$. Moreover there exists a constant $C_\varepsilon>0$ such that 
\[
\| (-\Delta)^\alpha f\|_{C^\beta(\T^3)} \leq C_\varepsilon [f]_{C^{2 \alpha +\beta+ \varepsilon}(\T^3)}\,.
\]
\end{theorem}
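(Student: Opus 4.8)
The plan is to estimate $(-\Delta)^\alpha f$ directly from the singular-integral representation \eqref{lapl_global}, which is legitimate here since $\gamma:=2\alpha+\beta+\varepsilon>2\alpha$; observe also that the constraint $2\alpha+\beta+\varepsilon\le 1$ forces $\beta\in[0,1)$, so $C^\beta(\T^3)$ is an ordinary H\"older space. After the (absolutely convergent) change of variables $y\mapsto x-z$ we rewrite
\[
(-\Delta)^\alpha f(x)=C_\alpha\int_{\R^3}\frac{f(x)-f(x-z)}{|z|^{3+2\alpha}}\,dz ,
\]
and the whole proof reduces to two scale-dependent splittings of this integral, together with the elementary remark that, by periodicity, $|f(a)-f(b)|\le[f]_{C^\gamma(\T^3)}|a-b|^\gamma$ and simultaneously $|f(a)-f(b)|\le C[f]_{C^\gamma(\T^3)}$ for every $a,b\in\R^3$; this is what allows all bounds to be written in terms of the seminorm alone, and it is the only place where compactness of $\T^3$ is used.

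First I would prove the $C^0$ bound: fixing $x$ and splitting the $z$-integral into $\{|z|\le 1\}$ and $\{|z|>1\}$, the integrand is controlled on the first region by $[f]_{C^\gamma}|z|^{\gamma-3-2\alpha}$, which is integrable near the origin because $\gamma-2\alpha=\beta+\varepsilon>0$, and on the second region by $C[f]_{C^\gamma}|z|^{-3-2\alpha}$, which is integrable at infinity because $\alpha>0$. This already yields $\|(-\Delta)^\alpha f\|_{C^0}\le C_\varepsilon[f]_{C^\gamma}$, which is the claim when $\beta=0$.

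For the seminorm $[(-\Delta)^\alpha f]_{C^\beta}$ with $\beta>0$, I would fix $x_1,x_2$, set $r=|x_1-x_2|$, and write
\[
(-\Delta)^\alpha f(x_1)-(-\Delta)^\alpha f(x_2)=C_\alpha\int_{\R^3}\frac{\bigl[f(x_1)-f(x_1-z)\bigr]-\bigl[f(x_2)-f(x_2-z)\bigr]}{|z|^{3+2\alpha}}\,dz ,
\]
and split into $\{|z|\le r\}$ and $\{|z|>r\}$. On the near region I keep the numerator grouped as two first-order increments and bound $|f(x_i)-f(x_i-z)|\le[f]_{C^\gamma}|z|^\gamma$, so that piece is at most $C[f]_{C^\gamma}\int_{|z|\le r}|z|^{\gamma-3-2\alpha}\,dz=C[f]_{C^\gamma}r^{\beta+\varepsilon}$. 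On the far region I instead regroup the numerator as $[f(x_1)-f(x_2)]-[f(x_1-z)-f(x_2-z)]$ and bound each bracket by $[f]_{C^\gamma}r^\gamma$ (translation invariance makes both pairs of points lie at distance $r$ on $\T^3$), obtaining a contribution at most $C[f]_{C^\gamma}r^\gamma\int_{|z|>r}|z|^{-3-2\alpha}\,dz=C[f]_{C^\gamma}r^{\gamma-2\alpha}=C[f]_{C^\gamma}r^{\beta+\varepsilon}$. Since $r\le\diam(\T^3)$ the surplus factor $r^\varepsilon$ is bounded, so $[(-\Delta)^\alpha f]_{C^\beta}\le C_\varepsilon[f]_{C^\gamma}$; adding the $C^0$ bound completes the proof.

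Everything here is routine except for one point that must be handled with care: the deliberate use of two different groupings of the numerator — first-order increments near the origin, to tame the non-integrable singularity $|z|^{-3-2\alpha}$ there by exploiting $\gamma>2\alpha$, and the doubly-differenced form far away, to extract the decay $r^{-2\alpha}$ from integrability of the kernel at infinity — together with the bookkeeping that keeps all constants dependent only on $\alpha,\beta,\varepsilon$ and all bounds phrased through the H\"older seminorm of $f$.
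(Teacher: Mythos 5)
Your argument is correct, and there is nothing internal to compare it with: the paper does not prove Theorem \ref{lapla.holder}, but imports it from \cite{DR2018} (Theorem B.1) and \cite{RS2016} (Theorem 1.4), where the estimate is obtained in the semigroup framework for the fractional Laplacian on the torus. What your note buys is a self-contained, elementary proof from the pointwise representation \eqref{lapl_global}, which is legitimate precisely because $\gamma:=2\alpha+\beta+\varepsilon>2\alpha$. Your two groupings are exactly the right mechanism: on $|z|\le r$ the first-order increments give $\int_{|z|\le r}|z|^{\gamma-3-2\alpha}\,dz\le C r^{\beta+\varepsilon}/(\beta+\varepsilon)\le Cr^{\beta+\varepsilon}/\varepsilon$, while on $|z|>r$ the regrouped double difference, combined with periodicity (both $|f(x_1)-f(x_2)|$ and $|f(x_1-z)-f(x_2-z)|$ are bounded by $[f]_{C^\gamma(\T^3)}r^\gamma$ since the torus distance is at most the Euclidean one), yields $Cr^{\gamma-2\alpha}=Cr^{\beta+\varepsilon}$; discarding $r^\varepsilon\le C$ gives the $C^\beta$ seminorm bound, and the same reasoning at scale $1$ gives the $C^0$ bound. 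Three small points worth recording explicitly: (a) for $\beta=0$ the conclusion is membership in $C^0$, not merely $L^\infty$, which follows by dominated convergence with dominating function $[f]_{C^\gamma}\min\{|z|^\gamma,C\}\,|z|^{-3-2\alpha}$; (b) in the borderline case $\gamma=1$ the paper's seminorm $[f]_{C^1}$ is defined through first derivatives, but the increment bound $|f(a)-f(b)|\le[f]_{C^1}|a-b|$ you use still holds, so nothing breaks; (c) your tail integrals produce a factor $(2\alpha)^{-1}$, so as written the constant is $C_{\alpha,\varepsilon}$ rather than the stated $C_\varepsilon$ — harmless in this paper, where $\alpha$ is fixed, and recoverable uniformly in $\alpha$ if one keeps track of the kernel normalization $C_\alpha\sim\alpha$ as $\alpha\to0$.
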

We also have the following commutator estimate
\begin{proposition}\label{commutator_alpha}
Let $k_1$, $k_2$, $\alpha \in \big(0,\frac{1}{2}\big)$, $\beta \in (0,2)$, $f,g:\T^3\rightarrow\R^3$ and consider the non-local operator $T^\alpha (f,g):= (-\Delta)^\alpha (f\otimes g)-(-\Delta)^\alpha f\otimes g- f\otimes(-\Delta)^\alpha g $. Assume also that $k_1+k_2 =2\alpha$. We have the following

\begin{itemize}
\item[(i)]if $\max(k_1,k_2)+\frac{\beta}{2}<2\alpha$ there exists a constant $C=C_{k_1, k_2, \alpha, \beta}>0$ such that
$$
\|T^\alpha(f,g)\|_{C^{\beta}(\T^3)}\leq C \|f\|_{C^{k_1+\sfrac{\beta}{2}}(\T^3)}\|g\|_{C^{k_2+\sfrac{\beta}{2}}(\T^3)}\,;
$$
\item[(ii)] if $\min(k_1,k_2)+\frac{\beta}{2}\geq 2\alpha$ and $\max(k_1,k_2)+\frac{\beta}{2}<1$ then for every small $\varepsilon>0$ there exists a constant $C=C_{k_1, k_2, \alpha, \beta, \varepsilon}>0$ such that
$$
\|T^\alpha(f,g)\|_{C^{\min(k_1,k_2)+\sfrac{\beta}{2}-\varepsilon}(\T^3)}\leq C \|f\|_{C^{k_1+\sfrac{\beta}{2}}(\T^3)}\|g\|_{C^{k_2+\sfrac{\beta}{2}}(\T^3)}\,.
$$
\end{itemize}
\end{proposition}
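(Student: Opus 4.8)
The plan is to reduce the commutator to a single absolutely convergent singular integral and then estimate it by a dyadic near/far splitting, in the spirit of the proof of Proposition~\ref{t:2schau}. \textbf{Pointwise formula.} First I would insert the representation \eqref{lapl_global} into each of the three terms defining $T^\alpha(f,g)$ and observe that the three integrands combine, after an elementary cancellation of the numerator, into the single absolutely convergent integral
\begin{equation}\label{comm_repr}
T^\alpha(f,g)(x)=-C_\alpha\int_{\R^3}\frac{(f(x)-f(y))\otimes(g(x)-g(y))}{|x-y|^{3+2\alpha}}\,dy\,,
\end{equation}
valid first for smooth $f,g$ and then in general by approximation, with $f,g$ viewed as periodic functions on $\R^3$. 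The cancellation is exactly what makes the integral converge under the stated hypotheses: near $y=x$ the numerator is $O(|x-y|^{k_1+k_2+\beta})=O(|x-y|^{2\alpha+\beta})$ and $2\alpha+\beta-(3+2\alpha)>-3$, while for $|x-y|\ge1$ periodicity bounds the numerator against the integrable tail $|x-y|^{-3-2\alpha}$. Exchanging the roles of $f,g$ and of $k_1,k_2$, I may assume $k_1\le k_2$, so that $\min(k_1,k_2)=k_1$. Splitting $\{|x-y|<1\}\cup\{|x-y|\ge1\}$ in \eqref{comm_repr}, using $|f(x)-f(y)|\le[f]_{C^{k_1+\beta/2}}|x-y|^{k_1+\beta/2}$ (resp.\ $\le2\|f\|_{C^0}$) on the two pieces --- legitimate since $\max(k_1,k_2)+\beta/2<1$ --- and $\int_0^1 r^{\beta-1}\,dr<\infty$, I obtain the $C^0$ bound $\|T^\alpha(f,g)\|_{C^0}\le C\|f\|_{C^{k_1+\beta/2}}\|g\|_{C^{k_2+\beta/2}}$ in both cases.

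\textbf{H\"older estimate.} For $x_1,x_2$ with $\lambda=|x_1-x_2|$ and $\bar x=\frac{x_1+x_2}{2}$, I split the difference of \eqref{comm_repr} at $x_1$ and $x_2$ into the contributions of $\{|y-\bar x|<2\lambda\}$ and $\{|y-\bar x|\ge2\lambda\}$. On the near region each of the two terms is estimated as in the $C^0$ bound and produces a factor $\lambda^{a+b-2\alpha}$ for any exponents $a\le k_1+\beta/2$, $b\le k_2+\beta/2$ with $a+b>2\alpha$. On the far region, with $F(x,y)=(f(x)-f(y))\otimes(g(x)-g(y))$ and $K(x,y)=|x-y|^{-3-2\alpha}$, I use the splitting $F(x_1,y)K(x_1,y)-F(x_2,y)K(x_2,y)=\big(F(x_1,y)-F(x_2,y)\big)K(x_1,y)+F(x_2,y)\big(K(x_1,y)-K(x_2,y)\big)$, the mean value bound $|K(x_1,y)-K(x_2,y)|\le C\lambda|\bar x-y|^{-4-2\alpha}$ (valid because $[x_1,x_2]$ stays within $\lambda/2$ of $\bar x$ while $|y-\bar x|\ge2\lambda$), and the expansion $F(x_1,y)-F(x_2,y)=(f(x_1)-f(x_2))\otimes(g(x_1)-g(y))+(f(x_2)-f(y))\otimes(g(x_1)-g(x_2))$. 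Every resulting term has the shape ``(a difference factor $\lesssim\lambda^{a}$, or a kernel gain $\lambda$) times (an auxiliary H\"older factor $\lesssim|\bar x-y|^{b}$) times $|\bar x-y|^{-3-2\alpha}$ (or $|\bar x-y|^{-4-2\alpha}$)'', so that integration over $\{2\lambda<|y-\bar x|<1\}$ yields $\lambda^{a+b-2\alpha}$ \emph{provided} $b<2\alpha$, while the tail $\{|y-\bar x|\ge1\}$ contributes only $\lambda^{a}$ or $\lambda$, harmless because the target exponent will be $<1$.

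\textbf{Choice of the exponents.} In case (i), since $\max(k_1,k_2)+\beta/2<2\alpha$, I may take every auxiliary exponent equal to $k_1+\beta/2$ or $k_2+\beta/2$ (these are automatically $<2\alpha$), so each term is $\lesssim\lambda^{(k_1+\beta/2)+(k_2+\beta/2)-2\alpha}=\lambda^{\beta}$ --- and $\beta<1$ here, because $\beta/2<2\alpha-\max(k_1,k_2)\le\alpha$ --- which gives the bound of (i). In case (ii) one has $\beta/2\ge\max(k_1,k_2)$ and $k_i+\beta/2\ge2\alpha$, so an auxiliary factor can carry at most the exponent $2\alpha-\varepsilon$ before the radial integral ceases to gain a power of $\lambda$; the critical term is the one in which the low-regularity difference $f(x_1)-f(x_2)$ (which carries exponent $k_1+\beta/2$) multiplies an auxiliary $g$-factor capped at $2\alpha-\varepsilon\le k_2+\beta/2$, giving $\lambda^{(k_1+\beta/2)+(2\alpha-\varepsilon)-2\alpha}=\lambda^{\min(k_1,k_2)+\beta/2-\varepsilon}$, while all remaining terms --- the near region, and the terms with an $f$-far factor, for which $2\alpha-\varepsilon\le k_1+\beta/2$ since $k_1+\beta/2\ge2\alpha$ --- carry at least this power of $\lambda$. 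Combined with the $C^0$ bound, and since $\min(k_1,k_2)+\beta/2-\varepsilon<\max(k_1,k_2)+\beta/2<1$, this gives (ii).

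\textbf{Main obstacle.} The only genuinely technical point is the bookkeeping of the far-region terms: for each of them one must select a H\"older exponent for the difference factor and one for the auxiliary factor so that (a) neither exceeds the regularity actually available for $f$ (respectively $g$); (b) the auxiliary exponent stays strictly below $2\alpha$, so the radial integral against the singular kernel still gains a power of $\lambda$; and (c) the resulting power of $\lambda$ is at least the claimed H\"older exponent. In the regime of (ii) the constraint (a) for $f$ and the constraint (b) both pin the critical value to $2\alpha$, and the $\varepsilon$-loss is precisely the cost of separating them --- the same phenomenon as the loss in $C^n$-type, rather than H\"older, Schauder estimates mentioned after \eqref{luigi}.
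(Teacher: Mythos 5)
Your proposal follows essentially the same route as the paper's proof: the same pointwise representation $T^\alpha(f,g)(x)=C_\alpha\int_{\R^3}\frac{(f(x)-f(y))\otimes(g(y)-g(x))}{|x-y|^{3+2\alpha}}\,dy$, the same near/far splitting around the midpoint $\bar x$ at scale $\lambda=|x_1-x_2|$, the same expansion of the far-field difference into a ``difference of $F$'' term and a ``difference of kernel'' term with the mean-value bound $\lambda|\bar x-y|^{-4-2\alpha}$, and the same exponent bookkeeping, including capping the auxiliary H\"older exponent at $2\alpha-\varepsilon$ in case (ii), which is exactly where the paper's $\varepsilon$-loss comes from. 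The argument is correct (your blanket condition ``$b<2\alpha$'' is stated only for the single-auxiliary-factor terms, but the net powers of $\lambda$ you claim, $\lambda^\beta$ in (i) and $\lambda^{\min(k_1,k_2)+\sfrac{\beta}{2}-\varepsilon}$ in (ii), coincide with the paper's estimates \eqref{primidue}--\eqref{quarto_due}).
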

An easy consequence of the previous proposition is that, taking $f=g=u$ and $k_1=k_2=\alpha$, one gets
\begin{equation}\label{est_comm_1}
\|T^\alpha (u)\|_{C^\beta(\T^3)}\leq C_{\alpha,\beta}\|u\|^2_{C^{\alpha+\sfrac{\beta}{2}}(\T^3)}\qquad \textit{if }\, \frac{\beta}{2}<\alpha\,;
\end{equation}
\begin{equation}\label{est_comm_2}
\|T^\alpha (u)\|_{C^{\alpha +\sfrac{\beta}{2}-\varepsilon}(\T^3)}\leq C_{\alpha,\beta,\varepsilon}\|u\|^2_{C^{\alpha+\sfrac{\beta}{2}}(\T^3)}\qquad  \textit{if }\, \alpha\leq\frac{\beta}{2},\, \alpha+\frac{\beta}{2}<1\,;
\end{equation}
where we used the notation $T^\alpha(u)=T^\alpha(u,u)$.
\begin{proof}
A direct consequence of \eqref{lapl_global} is the following pointwise formula
\[
T^\alpha( f,g)(x)=C_\alpha \int_{\R^3} \frac{\big( f(x)-f(y)\big) \otimes \big( g(y)-g(x)\big)}{|x-y|^{3+2\alpha}}\,dy\,.
\]

%
The estimate $\|T^\alpha f\|_{C^0}\leq C \|f\|_{C^{k_1+\sfrac{\beta}{2}}}\|g\|_{C^{k_2+\sfrac{\beta}{2}}}$ it is easy and is left to the reader.\\

We fix $x_1,x_2 \in \R^3$ and we define $\overline{x}:=\frac{x_1+x_2}{2}$ and $\lambda=|x_1-x_2|$. For simplicity we also define the tensor $\varphi(x,y):=\big(f(x)-f(y)\big)\otimes\big(g(y)-g(x)\big)$. We now split 
\begin{align*}
\frac{T^\alpha (f,g)(x_1)-T^\alpha (f,g)(x_2)}{C_\alpha}&= \int_{B_\lambda(\overline{x})}\frac{\varphi(x_1,y)}{|x_1-y|^{3+2\alpha}}\,dy+\int_{B_\lambda(\overline{x})}\frac{\varphi(x_2,y)}{|x_2-y|^{3+2\alpha}}\,dy
+\int_{B^c_\lambda(\overline{x})}\frac{\varphi(x_1,y)-\varphi(x_2,y)}{|x_1-y|^{3+2\alpha}}\,dy\\&+\int_{B^c_\lambda(\overline{x})}\Big( \frac{1}{|x_1-y|^{3+2\alpha}}- \frac{1}{|x_2-y|^{3+2\alpha}}\Big) \varphi(x_2,y)\,dy=I+II+III+IV\,.
\end{align*}
The first two integrals can be estimated as
\begin{equation}\label{primidue}
|I|,|II|\leq [f]_{C^{k_1+\sfrac{\beta}{2}}}[g]_{C^{k_2+\sfrac{\beta}{2}}}\int_{B_\lambda(\overline{x})}\Big(\frac{1}{|x_1-y|^{3-\beta}}+ \frac{1}{|x_2-y|^{3-\beta}} \Big)\,dy\leq C \lambda^\beta [f]_{C^{k_1+\sfrac{\beta}{2}}}[g]_{C^{k_2+\sfrac{\beta}{2}}}\,.
\end{equation}
Now we note that the difference $\varphi(x_1,y)-\varphi(x_2,y)$ can be rewritten as
$$
\varphi(x_1,y)-\varphi(x_2,y)=\big(f(x_1)-f(x_2)\big)\otimes \big(g(y)-g(x_2)\big)+\big(f(y)-f(x_1)\big)\otimes \big(g(x_1)-g(x_2)\big)\,.
$$
Thus, assuming $\max(k_1,k_2)+\frac{\beta}{2}<2\alpha$, we estimate
\begin{align}\label{terzo_uno}
|III|&\leq C [f]_{C^{k_1+\sfrac{\beta}{2}}}[g]_{C^{k_2+\sfrac{\beta}{2}}}\Bigg(\int_{B^c_\lambda(\overline{x})}\frac{\lambda^{k_1 +\frac{\beta}{2}}}{|\overline x-y|^{3+2\alpha -k_2-\frac{\beta}{2}}}\,dy+\int_{B^c_\lambda(\overline{x})}\frac{\lambda^{k_2 +\frac{\beta}{2}}}{|\overline x-y|^{3+2\alpha -k_1-\frac{\beta}{2}}}\,dy\Bigg) \nonumber \\
&\leq C\lambda^{\beta} [f]_{C^{k_1+\sfrac{\beta}{2}}}[g]_{C^{k_2+\sfrac{\beta}{2}}}
\end{align}
while in the case  $\min(k_1,k_2)+\frac{\beta}{2}\geq 2\alpha$, for every small $\varepsilon>0$ we estimate
\begin{align}\label{terzo_due}
|III|&\leq C \Bigg( [f]_{C^{k_1+\sfrac{\beta}{2}}}[g]_{C^{2\alpha-\varepsilon}}\int_{B^c_\lambda(\overline{x})}\frac{\lambda^{k_1 +\frac{\beta}{2}}}{|\overline x-y|^{3 +\varepsilon}}\,dy +[f]_{C^{2\alpha-\varepsilon}}[g]_{C^{k_2 +\frac{\beta}{2}}}\int_{B^c_\lambda(\overline{x})}\frac{\lambda^{k_2+\frac{\beta}{2}}}{|\overline x-y|^{3 +\varepsilon}}\,dy\Bigg) \nonumber\\
&\leq C \Big(\lambda^{k_1+\frac{\beta}{2}-\varepsilon} +\lambda^{k_2+\frac{\beta}{2}-\varepsilon} \Big)\|f\|_{C^{k_1+\sfrac{\beta}{2}}}\|g\|_{C^{k_2+\sfrac{\beta}{2}}}\,,
\end{align}
where we have also used that $|x_1-y|,|x_2-y|\gtrsim |\overline{x}-y|$ for every $y\in B^c_\lambda(\overline{x})$.\\
 We notice that for every $y\in B^c_\lambda(\overline{x})$ we have
$$
\Big|\frac{1}{|x_1-y|^{3+2\alpha}}- \frac{1}{|x_2-y|^{3+2\alpha}}\Big|=\Big|\int_0^1\frac{d}{dt}\frac{1}{|tx_1+(1-t)x_2-y |^{3+2\alpha}}\,dt\Big|\lesssim \lambda \frac{1}{|\overline{x}-y|^{4+2\alpha}}
$$
from which, in the case $\max(k_1,k_2)+\frac{\beta}{2}<2\alpha$, we get (notice that in this case $ \beta<4\alpha-2\max(k_1,k_2)<2\alpha<1$)
\begin{equation}\label{quarto}
|IV|\leq C \lambda [f]_{C^{k_1+\sfrac{\beta}{2}}}[g]_{C^{k_2+\sfrac{\beta}{2}}}  \int_{B_\lambda^c(\overline{x})}\frac{1}{|\overline x -y|^{4-\beta}}\,dy\leq C \lambda^\beta [f]_{C^{k_1+\sfrac{\beta}{2}}}[g]_{C^{k_2+\sfrac{\beta}{2}}} \,,
\end{equation}
while, if  $\min(k_1,k_2)+\frac{\beta}{2}\geq 2\alpha$ and $\max(k_1,k_2)+\frac{\beta}{2}<1$ we have
\begin{equation}\label{quarto_due}
|IV|\leq C \lambda [f]_{C^{k_1+\sfrac{\beta}{2}}}[g]_{C^{2\alpha}}\int_{B_\lambda^c(\overline{x})}\frac{1}{|\overline x -y|^{4-k_1-\frac{\beta}{2}}}\,dy\leq C\lambda^{k_1+\frac{\beta}{2}}\|f\|_{C^{k_1+\sfrac{\beta}{2}}}\|g\|_{C^{k_2+\sfrac{\beta}{2}}} \,.
\end{equation}
We conclude the proof combining \eqref{primidue}-\eqref{quarto_due}.
\end{proof}


\begin{thebibliography}{BDLISJ15}

\bibitem[DS13]{DS2013}
C. De Lellis, L. Székelyhidi Jr.
\newblock Dissipative continuous Euler flows.
\newblock {\em Inventiones Mathematicae} 193, Issue 2 (2013), Page 377-407.

\bibitem[DLS12]{DLS2012}
C.~De~Lellis and  L.~Sz\'ekelyhidi, Jr.
\newblock Dissipative Euler Flows and Onsager's Conjecture.
\newblock {\em Journal of the European Mathematical Society}  16(7),2017.


\bibitem[Ey94]{Eyink94}
G.~ L.~Eyink.
\newblock Energy dissipation without viscosity in ideal hydrodynamics I. Fourier analysis and local energy transfer.
\newblock {\em Physica D: Nonlinear Phenomena}, 78:222--240, 1994.

\bibitem[RS16]{RS2016}
L. Roncal and P. R. Stinga
\newblock Fractional Laplacian on the torus.
\newblock {\em Comm.  Contemp. Math.} 18 No. 03, 2016.



\bibitem[DR18]{DR2018}
 L. De Rosa.
\newblock Infinitely many Leray-Hopf solutions for the fractional Navier-Stokes equations.
\newblock {\em preprint}  	arXiv:1801.10235 , 2018.

\bibitem[Ons49]{Ons1949}
L.~Onsager.
\newblock {S}tatistical hydrodynamics.
\newblock {\em Il {N}uovo {C}imento (1943-1954)}, 6:279--287, 1949.


\bibitem[CDD17]{CDD2017}
M. Colombo, C. De Lellis and  L.~De~Rosa.
\newblock Ill-Posedness of Leray Solutions for the Hypodissipative Navier–Stokes Equations.
\newblock {\em Comm.  Math. Phys.}  1--30, 2017.

\bibitem[K41]{K1941}
 N. Kolmogorov. 
 \newblock The local structure of turbulence in an incompressible viscous fluid.
\newblock C.R. (Doklady) Acad. Sci. URSS (N.S.), 30:301–305, 1941.

\bibitem[Is13]{Is2013}
 P. Isett.
\newblock Regularity in time along the coarse scale flow for the incompressible Euler equations.
\newblock {\em preprint}  arXiv:1307.0565, 2013.

\bibitem[Is16]{Is2016}
P.~Isett
\newblock A proof of Onsager's cnjecture.
\newblock {\em Ann. of Math.}  accepted.


\bibitem[CET94]{CoETi1994}
P.~Constantin, W.~E, and E.S. Titi.
\newblock Onsager's conjecture on the energy conservation for solutions of
  {E}uler's equation.
\newblock {\em Comm. Math. Phys.}, 165(1):207--209, 1994.

\bibitem[RS16]{RS2016}
L.~Roncal and P.~R.~Stinga.
\newblock Fractional Laplacian on the torus.
\newblock {\em Commun. Contemp. Math},   	18(3):1550033, 26, 2016.


\bibitem[BDLSV17]{BDLSV2017}
T.~Buckmaster, C.~De~Lellis, L.~Sz\'ekelyhidi, Jr. and V.~Vicol.
\newblock Onsager's conjecture for admissible weak solutions.
\newblock {\em Preprint},  arXiv:1701.08678v1 ,2017.

\bibitem[BDLIS15]{DLS2015}
T.~Buckmaster, C.~De~Lellis, P.~Isett and L.~Székelyhidi Jr.
\newblock Anomalous dissipation for 1/5-Hoelder Euler flows.
\newblock {\em Ann. of Math.}  182(2): 127--172 ,2015.





\bibitem[BDS16]{BDS2016}
T. Buckmaster, C. De Lellis, L. Székelyhidi Jr.
\newblock Dissipative Euler flows with Onsager-critical spatial regularity.
\newblock {\em Comm. Pure Appl. Math.} 69 (2016), no. 9, 1613–1670.

\bibitem[BV17]{BV2017}
T.~Buckmaster and V.~Vicol.
\newblock Nonuniqueness of weak solutions to the Navier-Stokes equation.
\newblock {\em 	arXiv:1709.10033 [math.AP]}, 2017.


\end{thebibliography}
\end{document}